\newtheorem{theorem}{Theorem}
\newtheorem{proposition}[theorem]{Proposition}
\newtheorem{lemma}[theorem]{Lemma}
\newtheorem{corollary}[theorem]{Corollary}
\theoremstyle{definition}
\newtheorem{definition}[theorem]{Definition}
\theoremstyle{remark}
\def\R{\mathbb{R}}
\def\N{\mathbb{N}}
\def\pscal#1#2{\left\langle#1,\,#2\right\rangle}
\def\dist{d_{\partial \Omega}}
\def\d{\delta}
\def \e{\varepsilon}
\def\X{\mathbf{X}}
\def\Xe{\X_{\varepsilon}}
\DeclareMathOperator{\cut}{\overline \Sigma}  
\DeclareMathOperator{\high}{M}
\DeclareMathOperator{\argmax}{argmax}
\begin{document}

\title[Infinity ground states]%
{Rigidity results \\ for variational infinity ground states}%
\author[G.~Crasta, I.~Fragal\`a]{Graziano Crasta,  Ilaria Fragal\`a}
\address[Graziano Crasta]{Dipartimento di Matematica ``G.\ Castelnuovo'', Univ.\ di Roma I\\
P.le A.\ Moro 2 -- 00185 Roma (Italy)}
\email{crasta@mat.uniroma1.it}

\address[Ilaria Fragal\`a]{
Dipartimento di Matematica, Politecnico\\
Piazza Leonardo da Vinci, 32 --20133 Milano (Italy)
}
\email{ilaria.fragala@polimi.it}

\keywords{Infinity Laplacian, infinity ground states, overdetermined problems, viscosity solutions}
\subjclass[2010]{Primary 49K20, Secondary 49K30, 35J70,  35D40, 35N25.  }
\date{June 28, 2017}

\begin{abstract}  We prove two rigidity results for a variational infinity ground state $u$ of an open bounded convex domain $\Omega \subset \R ^n$. 
They state that $u$ coincides with a multiple of the distance from the boundary of $\Omega$ if either $|\nabla u|$ is constant on $\partial \Omega$, or $u$ is of class $C ^ {1,1}$ outside the high ridge of $\Omega$.  Consequently, in both cases $\Omega$ can be geometrically characterized as a ``stadium-like domain''. 
\end{abstract}

\maketitle

\section{Introduction}

Let $\Omega$ be an open bounded subset of $\R ^n$.  A function $u: \Omega \to \R$ is called a {\it variational infinity ground state} if there exists a sequence $p _j \to + \infty$ such that, denoting by $u _{p _j}$ the first Dirichlet eigenfunction of the $p_j$-Laplacian, there holds  
$$\lim _j u _{p _j} = u \qquad \text{ uniformly in } \overline \Omega\,.$$ 
Recall that, for any $p>1$, $u_p$ is given by the unique 
solution to the minimization problem 
\begin{equation}\label{p:lp}
\Lambda _ p  = \inf \Big \{ \frac{ \int _{\Omega} |\nabla u| ^ p}{ \int _{\Omega} | u| ^ p}  \ :\ u \in W ^ {1, p} _ 0 (\Omega) \, , { \int _{\Omega} | u| ^ p}  = 1 \Big \} \,. 
\end{equation}
By passing to the limit as $p \to + \infty$ in the Euler-Lagrange equation for problem \eqref{p:lp}, namely
$$- {\rm div} \big ( |\nabla u| ^ p \nabla u ) = \Lambda _p ^ p  |u| ^ { p-2} u \, , $$
it was proved by Juutinen, Lindqvist and Manfredi in \cite{JLM} that a variational infinity ground state is a viscosity solution to
\begin{equation}
\label{ground}
\min\left\{\frac{|Du|}{u} - \Lambda_\infty,\ -\Delta_\infty u 
\right\} = 0\, .
\end{equation}
Here $\Delta_\infty$ is the infinity Laplace operator, which is defined for smooth functions $u$ by
$$\Delta _\infty u := \nabla ^ 2 u \cdot \nabla u \cdot \nabla u\,$$ 
and
\[
\Lambda _ \infty := \lim _ p \Lambda _ p =  \big ( \max _{\overline \Omega} \dist (x) \big ) ^ {-1}\,,
\] 
where $d _{\partial \Omega}$ denotes the distance function from the boundary of $\Omega$. 

Since the pioneering paper \cite{JLM}, the infinity eigenvalue problem \eqref{ground} has been further studied in the literature \cite{CDJ, HSY, JLM2, NRSS, Yu},  and  a viscosity solution to it  is called an {\it infinity ground state}. 

We point out that, according to a counterexample given in \cite{HSY}, a non-convex domain may possess an infinity ground state which is not variational. On the other hand, on convex domains, 
the uniqueness  of solutions to problem \eqref{ground} up to some constant factor has not yet been proved or disproved (and the same holds for the uniqueness of variational infinity ground states). 
An exception is represented by the case when the distance function from the boundary is an infinity ground state: in this case, it turns out that constant multiples of $d _{\partial \Omega}$ are the only  infinity ground states (and actually the same assertion remains true on possibly non-convex domains provided the set ${\rm argmax}_{\overline \Omega} (d _{\partial \Omega})$ is connected). 

Such uniqueness result was proved in \cite{Yu} by Yu, who also observed that a necessary and sufficient condition on the geometry of $\Omega$ in order that $d _{\partial \Omega}$ is an infinity ground state is the coincidence between the high ridge $\high (\Omega)$ and the cut locus $\cut (\Omega)$ of $\Omega$ (see Section \ref{secprel} for the precise definitions).   Later on, this class of domains has been completely characterized in our paper \cite{CFb}, under the assumption that the space dimension is $n=2$
(or in higher dimensions if working within the restricted class of convex sets). In particular we proved that, for $n = 2$, cut locus and high ridge coincide if and only if the domain is the  tubular neighbourhood of a $C ^ {1,1}$ manifold (with or without boundary).  Since for convex domains this amounts to say that $\Omega$ is the parallel set of a line segment, possibly degenerated into a point, we call  
any domain with $\high(\Omega) = \cut(\Omega)$ a {\it stadium-like domain}. 

This paper is devoted to prove some rigidity results for variational infinity ground states on convex domains of $\R ^n$. 
Namely, we individuate  some sufficient conditions in order that a variational infinity ground state on a convex set $\Omega\subset \R ^n$ coincides with the distance function from its boundary, and consequently that $\Omega$ is a stadium-like domain and there are no further infinity ground states.

The first sufficient condition is an overdetermined boundary datum for the gradient of $u$ along the boundary, yielding
the following  Serrin-type theorem:

\begin{theorem}\label{thm1}
Let $\Omega$ be an open bounded convex subset of $\R ^n$, and let $u$ be a variational infinity ground state in $\Omega$. 
Assume that $u$ is of class $C ^ 1$ on $\{ x \in \overline \Omega \, :\, \dist (x) < \d\}$  for some $\d >0$. 
If there exists a positive constant $c$ such that
\[
|\nabla  u| = c \hbox{ on } \partial \Omega\, ,  
\]
 then $c = \Lambda_\infty$, $u = c \, \dist$, and $\Omega$ is a stadium-like domain.  
\end{theorem}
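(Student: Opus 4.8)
The plan is to exploit the structure of the infinity ground state equation \eqref{ground} together with the overdetermined boundary condition $|\nabla u| = c$ on $\partial \Omega$. The equation tells us two things: first, $-\Delta_\infty u \ge 0$ in the viscosity sense, i.e.\ $u$ is infinity superharmonic; second, at every point where $|\nabla u| / u > \Lambda_\infty$ we must have $-\Delta_\infty u = 0$, i.e.\ $u$ is infinity harmonic there. I would begin by comparing $u$ with the function $v := c\, \dist$. Since $\dist$ is the canonical infinity superharmonic "cone-like" function — it satisfies $|\nabla \dist| = 1$ a.e.\ and is a viscosity solution of $\Delta_\infty \dist \le 0$ (in fact it solves the eikonal equation) — and since $v = u = 0$ on $\partial \Omega$ with $|\nabla v| = c = |\nabla u|$ there, the two functions agree to first order on the boundary.

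\textbf{Key steps.} First I would establish the inequality $u \le c\,\dist$ on $\overline\Omega$. This should follow from a comparison principle: $u$ is a viscosity subsolution of $-\Delta_\infty u \le$ something controlled, but more directly, from $|\nabla u| = c$ on $\partial\Omega$, the $C^1$ regularity near $\partial\Omega$, and the eikonal-type bound $|\nabla u| \le \Lambda_\infty u$ that one gets by reading \eqref{ground} as $|\nabla u|/u \ge \Lambda_\infty$ — wait, that inequality goes the other way, giving a lower bound $|\nabla u| \ge \Lambda_\infty u$, hence growth estimates. Let me restructure: from $|\nabla u| \ge \Lambda_\infty u$ one integrates along gradient flow lines / segments to the boundary to get $u(x) \le$ (something like) $c\, \dist(x)$ using the boundary gradient value, while infinity-superharmonicity combined with the comparison with cones (the key tool in the theory of infinity harmonic functions, cf.\ Crandall--Evans--Gariepy) gives the matching lower bound $u(x) \ge c\,\dist(x)$. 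Comparison with cones states that along any segment in $\Omega$ from a boundary point, an infinity superharmonic function lies above the linear interpolant; pushing the cone vertex to $\partial\Omega$ and using the slope $c$ produces $u \ge c\,\dist$. Combining the two inequalities yields $u = c\,\dist$ on $\overline\Omega$.

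\textbf{Identifying the constant and concluding.} Once $u = c\, \dist$, I plug back into \eqref{ground}: on $\{\dist \text{ differentiable}\}$ we have $|\nabla u| = c$ and $-\Delta_\infty u = 0$ (since $\dist$ is a distance function, $\nabla^2 \dist \cdot \nabla\dist = 0$ where smooth), so the $\min$ in \eqref{ground} equals $\min\{ c/u - \Lambda_\infty, 0\}$, and this must be $0$, forcing $c/u \ge \Lambda_\infty$ everywhere, i.e.\ $c \ge \Lambda_\infty \sup_{\overline\Omega} u = \Lambda_\infty \cdot c \cdot \max_{\overline\Omega}\dist = \Lambda_\infty \cdot c \cdot \Lambda_\infty^{-1} = c$. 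To get the reverse, note that at a maximum point $x_0$ of $\dist$ (a point of the high ridge), $u$ is not differentiable in general, but approaching $x_0$ the quotient $|\nabla u|/u \to c / (c\max\dist) = \Lambda_\infty$, and the superharmonicity/variational nature forces equality $c = \Lambda_\infty$ — indeed $\Lambda_\infty = (\max\dist)^{-1}$ is pinned down, and the variational structure guarantees $u$ attains the value $\max\dist$ times $c$ consistently only if $c = \Lambda_\infty$. Hence $u = \Lambda_\infty \dist$, so $\dist$ itself is an infinity ground state; by Yu's theorem (quoted in the introduction) this forces $\high(\Omega) = \cut(\Omega)$, i.e.\ $\Omega$ is stadium-like.

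\textbf{Main obstacle.} The delicate point is the rigorous proof of the two-sided comparison $u = c\,\dist$ at the viscosity level, since $u$ need not be smooth in the interior and $\dist$ is only Lipschitz. The upper bound $u \le c\,\dist$ requires turning the boundary gradient condition $|\nabla u| = c$ plus interior $C^1$ regularity in a collar $\{\dist < \delta\}$ into a global bound, propagating it inward using the PDE — probably via the comparison-with-cones property applied to $-u$ being infinity subharmonic is false, so one must instead use the first-order relation $|\nabla u| \ge \Lambda_\infty u$ more cleverly, or a doubling-of-variables argument against $c\,\dist$ as a supersolution of the relevant equation. The lower bound $u \ge c\,\dist$ via comparison with cones is more standard but still needs care at the high ridge where both functions lose smoothness. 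I expect this comparison argument to be the technical heart of the proof.
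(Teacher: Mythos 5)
Your proposal has a genuine gap at its technical heart: the claimed lower bound $u\ge c\,\dist$. The principle you invoke --- that an $\infty$-superharmonic function vanishing on $\partial\Omega$ with boundary slope $c$ must lie above $c\,\dist$, via ``comparison with cones'' --- is not a correct statement. Comparison with cones from below for $\infty$-superharmonic functions involves downward-opening cones with vertex inside the domain; it does not say that $u$ lies above the linear interpolant along segments emanating from the boundary (that would be concavity along lines, which $-\Delta_\infty u\ge 0$ does not imply), and an upward cone $c|x-y|$ with vertex $y\in\partial\Omega$ cannot be compared with $u$ since $u=0<c|x-y|$ on the rest of $\partial\Omega$. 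Indeed, functions like $\min\{c\,\dist,\epsilon\}$ are $\infty$-superharmonic, vanish on $\partial\Omega$ and have boundary slope $c$, yet are not $\ge c\,\dist$; so superharmonicity plus the overdetermined datum alone cannot give the bound. What is actually needed --- and what the paper uses --- is the \emph{subsolution} branch of \eqref{ground}: where $|\nabla u|>\Lambda_\infty u$ the function is $\infty$-subharmonic, and for ($C^{1,1}$ regularizations of) $\infty$-subharmonic functions $|\nabla u^\e|$ is nondecreasing along the normalized gradient flow. This is how the boundary value $|\nabla u|=c$ is propagated inward (Proposition~\ref{l:approx2}), giving $|\nabla u|\ge \Lambda_\infty$ a.e.; then the convexity of $\Omega$, through log-concavity and hence local semiconcavity of the variational ground state (Propositions~\ref{p:semiconc}--\ref{p:Yu1}), upgrades this a.e.\ inequality to a viscosity supersolution of the eikonal equation, and Ishii's comparison principle yields $u\ge\dist$. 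Your argument never uses convexity, semiconcavity, or the variational nature of $u$, which the paper stresses are essential; an argument valid for arbitrary ground states on arbitrary domains should be viewed with suspicion.

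The remaining steps are also not sound as written. The upper bound: knowing $|\nabla u|=c$ only on $\partial\Omega$, or the pointwise inequality $|\nabla u|\ge\Lambda_\infty u$, does not yield $u\le c\,\dist$ by ``integrating along flow lines''; the paper simply quotes the known bound $u\le\dist$ for the normalized variational ground state (Proposition~\ref{p:Yu2}(i)). The identification $c=\Lambda_\infty$ in your sketch is circular (``the variational structure guarantees \dots consistently only if $c=\Lambda_\infty$''); the paper obtains it cleanly and \emph{first}, using Proposition~\ref{p:Yu2}(ii): $u$ coincides with $\dist$ on a segment joining a high-ridge point to its boundary projection, so $\nabla u=\nabla\dist$ there, and the assumed $C^1$ regularity in the collar $\{\dist<\delta\}$ forces $c=\lim|\nabla\dist|=1=\Lambda_\infty$ (after normalization). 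Only the final step of your plan --- deducing that $\Omega$ is stadium-like from $u=\Lambda_\infty^{-1}$ times an infinity ground state equal to $\dist$, via Yu's theorem --- matches the paper and is fine.
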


The second condition is of completely different kind, as it involves the regularity of $u$ outside the high ridge. 
Let us stress that, to the best of our knowledge,  no regularity result is available for infinity ground states. The 
only remark in this direction appears in Section~4 of \cite{JLM2}, where the authors proved that,  in a two-dimensional square, there are no
variational infinity ground states 
which are of class $C^2$ in the punctured square. This is clearly encompassed by the following result:

\begin{theorem}\label{thm2}
	Let $\Omega$ be an open bounded convex subset of $\R ^n$, and let $u$ be a variational infinity ground state in $\Omega$.
	Assume that $u\in C^{1,1}(\Omega\setminus M(\Omega))$, where $M (\Omega) := \argmax_{\overline \Omega} (\dist)$.
	Then $ u$ is a multiple of  $\dist$, and $\Omega$ is a stadium-like domain. 
\end{theorem}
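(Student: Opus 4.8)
The plan is to exploit the structure of the infinity ground state equation \eqref{ground} together with the $C^{1,1}$ regularity assumption outside the high ridge, in order to show that $u$ must be (a multiple of) the distance function. The first step is to normalize: since $u$ is a variational infinity ground state, it solves \eqref{ground} in the viscosity sense, and $\Lambda_\infty = (\max_{\overline\Omega} \dist)^{-1}$; after scaling we may assume $\Lambda_\infty = 1$, so that $\max_{\overline\Omega} \dist = 1$, and we may normalize $u$ so that $\max_{\overline\Omega} u = 1$. A basic observation, already essentially contained in \cite{JLM}, is the two-sided comparison
\[
(1 - \dist(x))\, \le\, \text{something}\,, \qquad \dist \le u \le \text{?}
\]
— more precisely one knows $u \ge \Lambda_\infty \dist \cdot (\text{const})$-type bounds from the gradient constraint $|Du|/u \ge \Lambda_\infty$, which forces $u$ to grow at least exponentially-fast-in-$\log$, i.e. $u(x) \ge u(y) e^{-\Lambda_\infty d(x,y)}$ along segments; I would first record the consequence that $u \equiv 1$ on $M(\Omega)$ and that $u(x) \ge e^{-\dist(x)}$-type lower bounds hold (the precise elementary inequality coming from integrating the gradient constraint along distance-minimizing segments).

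The heart of the argument is the following. Outside $M(\Omega)$ the function $u$ is $C^{1,1}$, hence twice differentiable a.e. (Rademacher/Alexandrov), and the infinity Laplacian $\Delta_\infty u = \langle \nabla^2 u \cdot \nabla u, \nabla u\rangle$ is defined a.e.; moreover $C^{1,1}$ viscosity solutions of $-\Delta_\infty u \ge 0$ are infinity superharmonic in the a.e. (and hence in the classical, via the $C^{1,1}$ bound) sense, so $\Delta_\infty u \le 0$ a.e. on $\Omega\setminus M(\Omega)$. On the other hand, on the set $\{|Du| > \Lambda_\infty u\}$ the equation \eqref{ground} forces $-\Delta_\infty u = 0$, i.e. $\Delta_\infty u = 0$ there. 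I would then run the standard gradient-flow / stream-line argument for the infinity Laplacian: along an ascending gradient curve $\gamma$ of $u$ (well-defined and $C^{1,1}$ away from $M(\Omega)$ because $\nabla u$ is Lipschitz and non-vanishing there — non-vanishing because $|Du|/u \ge \Lambda_\infty > 0$), the quantity $|\nabla u(\gamma(t))|$ is monotone, being governed by $\frac{d}{dt}\tfrac12|\nabla u|^2 = \Delta_\infty u \le 0$. Combining this monotonicity along gradient lines with the constraint $|\nabla u| \ge \Lambda_\infty u$ and the fact that $u$ increases to its maximum $1$ along such curves as they approach $M(\Omega)$, one deduces that in fact equality $|\nabla u| = \Lambda_\infty u$ must be propagated, and then that $|\nabla u| \equiv \Lambda_\infty$ on $\{u < 1\}$ near the boundary; tracking the gradient line all the way back to $\partial\Omega$ shows it is a distance-minimizing segment and that $u = e^{-\Lambda_\infty \dist}$ is forced — or, after the appropriate change of variables consistent with the normalization, that $u$ is an explicit radial-in-$\dist$ profile. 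The point is that this identifies $u$ as a function of $\dist$ alone with $|\nabla u|$ matching $\Lambda_\infty|\nabla \dist| = \Lambda_\infty$ along every such curve, which in turn forces $\dist$ itself to be a viscosity solution of $-\Delta_\infty \dist = 0$, i.e. $\high(\Omega) = \cut(\Omega)$.

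The final step is to invoke Yu's uniqueness theorem (quoted in the Introduction): once we know that $\dist$ is an infinity ground state of $\Omega$ — equivalently $\high(\Omega) = \cut(\Omega)$ — every infinity ground state, and in particular our $u$, is a constant multiple of $\dist$; and $\high(\Omega)=\cut(\Omega)$ on a bounded convex set means precisely that $\Omega$ is a stadium-like domain. I expect the main obstacle to be the rigorous handling of the gradient curves and the propagation of the equality $|\nabla u| = \Lambda_\infty u$: away from $M(\Omega)$ everything is fine by the $C^{1,1}$ assumption, but one must control what happens as the gradient lines approach $M(\Omega)$ (where $u = 1$), and one must rule out that the ``free boundary'' $\partial\{|Du| > \Lambda_\infty u\}$ carries pathological behaviour — this is where the precise a.e.-second-differentiability of $C^{1,1}$ functions, together with the sign condition $\Delta_\infty u \le 0$ on the whole of $\Omega\setminus M(\Omega)$ and not merely on an open subset, has to be used carefully. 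A secondary technical point is to verify that $M(\Omega)$ is small enough (it is a convex set of measure zero, in fact a single point or a segment in the stadium case, but a priori only a compact convex set) that the gradient lines fill up $\Omega\setminus M(\Omega)$ and the identification extends by continuity to all of $\overline\Omega$.
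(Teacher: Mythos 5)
There is a genuine gap, and it sits exactly at the heart of your argument. Along ascending gradient curves you can only obtain monotonicity of $|\nabla u|$ --- in fact constancy, since by Yu's Theorem 3.1 $u$ is $\infty$-harmonic (not merely $\infty$-superharmonic) in $\Omega\setminus M(\Omega)$ --- and this by itself cannot force any equality to ``propagate''. Worse, the identity you claim to propagate, $|\nabla u| = \Lambda_\infty u$, is false for the very solution the theorem identifies: for $u = c\,\dist$ one has $|\nabla u| \equiv \Lambda_\infty$ while $\Lambda_\infty u < \Lambda_\infty$ off the high ridge, so equality holds only on $M(\Omega)$; and the profile $u = e^{-\Lambda_\infty \dist}$ you arrive at is incompatible both with the boundary condition $u=0$ on $\partial\Omega$ and with the conclusion $u=c\,\dist$. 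What is missing is an anchor fixing the (a priori trajectory-dependent) constant value of $|\nabla u|$ on each flow line. The paper supplies it in its Step 1: by Proposition \ref{p:Yu2}, $u \le \dist$ with $u = \dist$ on every segment $[y,z]$, $z\in M(\Omega)$, $y\in\Pi(z)$, hence $|\nabla u| = |\nabla \dist| = 1$ on such segments (after normalizing $\Lambda_\infty=1$), and by the continuity of the derivate $S^-$ (Proposition \ref{p:Yu3}) this gives $S^-(z)=1$ at every ridge point; since every trajectory terminates on $M(\Omega)$ in finite time (Lemma \ref{l:prel2}) and $|\nabla u|$ is constant along it, one concludes $|\nabla u| = 1$ a.e.\ in $\Omega$. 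Nothing in your outline plays this role.

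The closing step is also unsubstantiated: gradient flow lines of a $C^{1,1}$ $\infty$-harmonic function need not be straight, let alone distance-minimizing segments reaching $\partial\Omega$, so ``tracking the gradient line back to the boundary'' does not identify $u$ as a function of $\dist$, and your route to $\high(\Omega)=\cut(\Omega)$ (needed before invoking Yu's uniqueness theorem) therefore does not close. The paper instead converts $|\nabla u|=1$ a.e.\ together with local semiconcavity of $u$ --- which comes from the log-concavity of variational ground states on convex domains (Proposition \ref{p:Yu1}), the only point where convexity of $\Omega$ is used and a fact your proposal never invokes --- into the statement that $u$ is a viscosity solution of the eikonal equation (via \cite{CaSi}), and then applies Ishii's comparison principle \cite{Ishii} with $u=0$ on $\partial\Omega$ to get $u=\dist$; the stadium-like property then follows from Theorem 2.7 of \cite{Yu}. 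So while your general flavor (gradient flow plus $\infty$-harmonicity away from the ridge) matches the paper, the two decisive ingredients --- the anchoring of $|\nabla u|$ at the ridge via $S^-$ and the semiconcavity/eikonal comparison argument --- are absent, and the propagation claim that replaces them is incorrect.
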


By combining Theorems \ref{thm1} and \ref{thm2} with the results proved in \cite{CFb}, we obtain:

\begin{corollary}\label{cor12}
Let $\Omega$ be an open bounded convex subset of $\R ^n$, and let $u$ be a variational infinity ground state in $\Omega$
which satisfies the assumptions of Theorem \ref{thm1} or of Theorem \ref{thm2}. 
Then:

\smallskip
\begin{itemize}
\item[(i)] if $n=2$, $\Omega$ is the parallel neighbourhood of a line segment, possibly reduced to one point; 

\smallskip
\item[(ii)] if $n \geq 2$ and $\partial \Omega$ is of class $C ^2$, then $\Omega$ is a ball.
\end{itemize}

\end{corollary}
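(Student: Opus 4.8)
We plan to obtain the statement by feeding Theorems~\ref{thm1} and~\ref{thm2} into the characterization of stadium-like convex domains established in \cite{CFb}. Under either set of hypotheses, those theorems give that $u=c\,\dist$ for some constant $c>0$ and that $\Omega$ is a stadium-like domain, i.e.\ $\high(\Omega)=\cut(\Omega)$. Since $\Omega$ is convex, $\dist$ is concave on $\overline\Omega$, hence $M:=\high(\Omega)=\argmax_{\overline\Omega}\dist$ is a nonempty compact convex set; moreover $\dim M\le n-1$, because a concave function cannot be constant on a full-dimensional subset without being globally constant. Finally, it is a general feature of stadium-like domains (see \cite{CFb}) that $\dist(x)=r-d(x,M)$ for every $x\in\overline\Omega$, where $r:=\max_{\overline\Omega}\dist=\Lambda_\infty^{-1}$; in particular $\Omega$ is the open $r$-neighbourhood $\{x:d(x,M)<r\}$ of $M$ and $\partial\Omega=\{x:d(x,M)=r\}$. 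Part~(i) is then immediate: by \cite{CFb}, for $n=2$ the equality $\high(\Omega)=\cut(\Omega)$ forces $\Omega$ to be the tubular neighbourhood of a $C^{1,1}$ manifold with or without boundary, and since $M$ is convex this manifold can only be a line segment (possibly degenerate to a point), whose parallel neighbourhood $\Omega$ therefore is.

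\medskip

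For part~(ii) it remains to show that, when $\partial\Omega$ is of class $C^2$, the high ridge $M$ reduces to a single point, so that $\Omega=B_r(M)$ is a ball. Suppose $\dim M\ge1$. Fix a unit vector $\nu$ orthogonal to $\mathrm{aff}(M)$ (which exists since $\dim M\le n-1$) and a relative boundary point $w$ of $M$ admitting a single relative outer normal ray $\R_+e$; such points exist, being dense in the relative boundary of $M$. For small $s\ge0$ the point $w-se$ lies in the relative interior of $M$, so $(w-se)+r\nu$ has nearest point $w-se$ in $M$ and hence belongs to $\partial\Omega$; thus $\partial\Omega$ contains a straight segment issuing from $\xi:=w+r\nu$ with tangent direction $e$, along which the normal curvature vanishes. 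On the other hand, since $\langle e,y-w\rangle\le0$ and $\langle\nu,y-w\rangle=0$ for all $y\in M$, the vector $(\nu+te)/\sqrt{1+t^2}$ lies in the normal cone of $M$ at $w$ for every $t\ge0$, so the circular arc $t\mapsto w+r(\nu+te)/\sqrt{1+t^2}$ also lies on $\partial\Omega$; it issues from $\xi$ with the same tangent direction $e$, but lies on $\partial B_r(w)$ and hence has normal curvature $-1/r$ at $\xi$ with respect to the outer unit normal to $\partial\Omega$, which is $\nu$. If $\partial\Omega$ were of class $C^2$, its second fundamental form at $\xi$ would assign these two curves the common value $\mathrm{II}_\xi(e,e)$ in the direction $e$, contradicting that this equals both $0$ and $-1/r$. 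Hence $\dim M=0$ and $\Omega$ is a ball of radius $\Lambda_\infty^{-1}$.

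\medskip

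The crux of the proof, and its only delicate point, is this curvature-discontinuity argument: one must take $w$ to be a smooth relative boundary point of $M$, so that moving inward along $-e$ really keeps one inside $M$, and one has to verify that the flat segment and the spherical arc of $\partial\Omega$ built above genuinely emanate from $\xi$ with a common tangent (here one uses that $\partial\Omega$ is at least $C^{1,1}$, being the boundary of a parallel body of a convex set) and carry the two incompatible normal curvatures. Everything else is a direct combination of Theorems~\ref{thm1}--\ref{thm2} with the structure theorem for stadium-like convex domains of \cite{CFb}.
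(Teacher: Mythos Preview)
Your proof is correct. The paper does not spell out a proof of Corollary~\ref{cor12}: it merely combines Theorems~\ref{thm1}--\ref{thm2} with the characterization of stadium-like domains recalled from \cite{CFb} in Section~\ref{secprel}, so that both (i) and (ii) follow by direct citation of \cite[Theorems~6 and~12]{CFb}. Your treatment of (i) is the same. For (ii), however, you retain from \cite{CFb} only the structural identity $\dist = r - d(\cdot, M)$ (hence $\partial\Omega=\{d(\cdot,M)=r\}$) and then give a self-contained curvature argument that $M$ must reduce to a point when $\partial\Omega\in C^2$: through $\xi=w+r\nu$ you exhibit a straight segment and a circular arc on $\partial\Omega$, both tangent to the line $\R e$ at $\xi$, whose normal curvatures are $0$ and $1/r$ respectively, which is incompatible with a well-defined second fundamental form. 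This reproves, in an elementary and transparent way, the particular case of \cite{CFb} that the paper invokes, at the price of a little differential-geometric bookkeeping. Two cosmetic remarks, neither affecting validity: at $s=0$ the point $w$ lies on the relative boundary of $M$ rather than in its relative interior (but $w\in M$ is all you need for $(w-se)+r\nu\in\partial\Omega$); and the segment and the arc leave $\xi$ in opposite directions $-e$ and $+e$, yet since $\mathrm{II}_\xi(e,e)=\mathrm{II}_\xi(-e,-e)$ the contradiction stands.
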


Our proofs of Theorems \ref{thm1} and \ref{thm2} are based on a gradient flow technique, which in case of Theorem \ref{thm2} is applied directly to $u$, while in case of Theorem \ref{thm1} is applied to its regularizations via supremal convolution. 
This approach was firstly introduced in \cite{CFd} in order to study inhomogeneous 
overdetermined boundary value problems for the infinity Laplacian. 
Here the additional difficulty is the dichotomy appearing in equation \eqref{ground}, 
or in other terms the fact that we have to deal with singular points of infinity ground states. 
A further obstacle is represented by the fact that the operator $F(u,p,X) := \min\{ \frac{|p|}{  u} - \Lambda_\infty, \ -\pscal{Xp}{p}\}$
is decreasing in the $u$ variable (for $u > 0$); for this reason, 
the typical result ensuring that the supremal convolutions are viscosity supersolutions (see {\it e.g.}\ \cite[Theorem 10]{Kat}) does not apply to  problem \eqref{ground}.

The key argument we enforce to overcome these difficulties is the crucial property of variational infinity ground states on convex domains of being log-concave, and consequently locally semiconcave. This enables us to work, rather than in the setting of viscosity solutions to \eqref{ground}, in the setting of 
solutions to the eikonal equation. The advantage is twofold:  it is equivalent to deal with
almost everywhere solutions or viscosity solutions, and we can invoke a comparison principle leading to rigidity.  

In fact, let us remark that the role of the convexity assumption on $\Omega$ in Theorems \ref{thm1} and \ref{thm2} is 
precisely to ensure that $u$ is log-concave  and hence locally semiconcave. 

Moreover, the reason why we cannot state our results for arbitrary infinity ground states (possibly not variational) on convex domains,
is the lack of information about their local semiconcavity.  
We address the extension of Theorems \ref{thm1} and \ref{thm2} to the not variational case as an interesting open problem. 

The contents are organized as follows. 
In order to make the paper self-contained, we start by recalling in Section \ref{secprel} the basic facts about variational infinity ground states which intervene in the proofs of Theorems \ref{thm1} and \ref{thm2}, along with the geometric results on the distance function which allow to deduce Corollary \ref{cor12}.  
Then the proofs of Theorems~\ref{thm1} and~\ref{thm2} are given respectively in Sections \ref{proof1} and \ref{proof2}.

\section{Background material}\label{secprel}

Hereafter  we recall the main results we shall need to exploit about variational infinity ground states. 
We state them as a series of propositions. 

\begin{proposition}\label{p:semiconc}
Let \(u\colon A\to\R\) be a locally Lipschitz function on an open set \(A\subset\R^n\),
and let \(\psi\in C^2(I)\) be a function satisfying \(\psi'(t) > 0\) for every \(t\in I\), 
where \(I\subset\R\) is an open interval containing \(u(A)\).
If the composite function \(v = \psi\circ u\) is concave on every convex subset of \(A\),
then \(u\) is locally semiconcave in \(A\).
\end{proposition}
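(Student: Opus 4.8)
The plan is to reduce the statement to a purely local and one-dimensional computation, exploiting that semiconcavity is a local property. Fix a point $x_0 \in A$ and a small convex ball $B = B(x_0, r) \subset\subset A$ on which $u$ is Lipschitz, say with constant $L$, and on which $\psi' \geq m > 0$ and $\psi'' $ is bounded, say $|\psi''| \leq K$, on the compact set $u(\overline B) \subset I$. On $B$ the function $v = \psi \circ u$ is concave by hypothesis, hence $v$ is semiconcave with linear modulus: more precisely, being concave, $v$ satisfies $v(x+h) + v(x-h) - 2v(x) \leq 0$ for all $x, x\pm h \in B$, which is semiconcavity with modulus zero. The goal is to transfer this to $u = \psi^{-1} \circ v$, where $\psi^{-1} \in C^2$ on the relevant interval with $(\psi^{-1})' = 1/(\psi'\circ\psi^{-1}) > 0$ bounded above by $1/m$, and $(\psi^{-1})''$ bounded by some constant depending on $m$ and $K$.

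The key step is the estimate of the second difference of $u$. Write $\phi = \psi^{-1}$, so $u = \phi \circ v$. For $x, x \pm h \in B$ set $a = v(x-h)$, $b = v(x+h)$, $c = v(x)$. Since $v$ is Lipschitz on $B$ (composition of the $C^2$ function $\psi$ with the Lipschitz $u$), we have $|a - c|, |b - c| \leq C|h|$ for a constant $C$ depending on $L$ and $\sup \psi'$. A second-order Taylor expansion of $\phi$ at $c$ gives
\[
u(x\pm h) = \phi(c) + \phi'(c)(v(x\pm h) - c) + \frac{1}{2}\phi''(\xi_\pm)(v(x\pm h) - c)^2
\]
for suitable $\xi_\pm$ between $c$ and $v(x\pm h)$. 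Adding the two identities and subtracting $2u(x) = 2\phi(c)$, the zeroth-order terms cancel, the first-order terms combine into $\phi'(c)\big(v(x+h) + v(x-h) - 2v(x)\big) \leq 0$ by concavity of $v$ and positivity of $\phi'$, and the remainder is bounded in absolute value by $\frac{1}{2}\|\phi''\|_\infty\big((b-c)^2 + (a-c)^2\big) \leq \|\phi''\|_\infty C^2 |h|^2$. Hence
\[
u(x+h) + u(x-h) - 2u(x) \leq \|\phi''\|_\infty C^2 |h|^2,
\]
which is precisely semiconcavity of $u$ on $B$ with a linear (indeed quadratic-in-$h$) modulus. Since $x_0$ was arbitrary, $u$ is locally semiconcave on $A$.

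The main obstacle, though minor, is keeping the constants under control: one must check that all the quantities $\|\phi''\|_\infty$, the Lipschitz constant of $v$, and the lower bound on $\psi'$ are finite on the relevant compact sets, which requires that $u(\overline B)$ stays inside $I$ and away from its endpoints — this is where local boundedness of $u$ and the hypothesis $u(A) \subset I$ with $I$ open are used. One should also note that the bound $v(x+h)+v(x-h)-2v(x) \leq 0$ only requires concavity of $v$ along the segment through $x\pm h$, which is guaranteed since $B$ is convex and $v$ is concave on convex subsets of $A$; this is why it suffices to assume concavity on convex subsets rather than global concavity. No comparison or viscosity machinery is needed here — it is a direct chain-rule estimate.
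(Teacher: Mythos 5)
Your argument is correct, but it follows a genuinely different route from the paper's. The paper works at the distributional level: it fixes $B\Subset A$, uses the distributional inequality $D^2 v\le 0$, approximates $\psi'(u)$ uniformly by smooth functions $\zeta_k$, integrates by parts against $\zeta_k\varphi$ and finally tests with $\varphi/\zeta_k$ to obtain the matrix bound $D^2 u\le \frac{|C|L^2}{c}\,I$ in the sense of distributions on $B$ (with $L=\sup_K|\nabla u|$, $c=\min\psi'>0$, $C=\min\psi''$), which is one of the standard characterizations of semiconcavity with linear modulus. You instead invert $\psi$, write $u=\phi\circ v$ with $\phi=\psi^{-1}\in C^2$, and prove directly the pointwise second-difference bound $u(x+h)+u(x-h)-2u(x)\le \|\phi''\|_\infty\,\mathrm{Lip}(v)^2|h|^2$ on a convex ball, using concavity of $v$ for the first-order term of the Taylor expansion and boundedness of $\phi''$ on the compact interval $v(\overline B)$ for the remainder. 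Your approach is more elementary: it avoids mollification, test functions and the distributional formalism altogether, and the constants it produces ($\sim |\psi''|L^2/(\psi')^3$ times $(\sup\psi')^2$, i.e.\ comparable to the paper's $|C|L^2/c$) are obtained by inspection. Two small points worth making explicit if you write this up: the midpoint inequality you obtain is equivalent, for continuous (here locally Lipschitz) functions, to the semiconcavity inequality with general convex combinations used in the paper (this is the standard characterization in Cannarsa--Sinestrari, Prop.~1.1.3), and the values $\xi_\pm$ where $\phi''$ is evaluated lie in $v(\overline B)$, which is a compact subinterval of $\psi(I)$ because $\overline B\subset A$ is connected and $u(\overline B)\subset I$ is compact, so $\|\phi''\|_\infty$ there is indeed finite. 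With these remarks your proof is complete, and arguably simpler than the one in the paper; what the paper's distributional argument buys is that it never needs the inverse function and yields the semiconcavity constant in the exact form in which it is usually quoted.
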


\begin{proof}
We remark that \(v\) is a locally Lipschitz function, and
\(\nabla v = \psi'(u)\, \nabla u \in L^\infty_{\text{loc}}(A)\).
Let \(B\Subset A\) and denote \(K := \overline{B}\).
Since \(\psi'(u)\) is Lipschitz continuous in \(K\),
there exists a sequence \((\zeta_k)_k \subset C^\infty(A)\) such that
\[
\zeta_k \to \psi'(u)\quad\text{uniformly in}\ K,\qquad
\nabla\zeta_k \to \nabla(\psi'(u)) = \psi''(u) \nabla u
\quad \text{in}\ L^1(K).
\]
Let us define the constants
\[
L := \sup_K |\nabla u|,
\qquad
C := \min_{u(K)} \psi'',
\qquad
c := \min_{u(K)} \psi' > 0.
\]

Let us fix a unit vector \(\xi\in\R^n\).
Since, by assumption, \(v\) is concave in \(A\), one has
\(D^2 v \leq 0\) in distributional sense.
Since \(\nabla u \in L^\infty(K)\),
then for every \(\varphi\in C^\infty_c(B)\), \(\varphi\geq 0\), 
it holds:
\[
\begin{split}
0 & \geq \int_K D_{ij} v \, \xi_i \xi_j\, \varphi
= - \int_K \psi'(u) D_i u D_j\varphi\, \xi_i \xi_j
= - \lim_k \int_K \zeta_k D_i u D_j \varphi \, \xi_i \xi_j
\\
& = \lim_k \pscal{D_j(\zeta_k D_i u \, \xi_i \xi_j)}{\varphi}
= \lim_k \left[\pscal{D_{ij} u \, \xi_i \xi_j)}{\zeta_k\varphi}
+ \int_K D_j \zeta_k\, D_i u\, \varphi \, \xi_i \xi_j\right]
\\
& = \lim_k \pscal{D_{ij} u \, \xi_i \xi_j}{\zeta_k\varphi}
+ \int_K \psi''(u) (\nabla u \cdot \xi)^2 \, \varphi\,.
\end{split}
\]
From the above inequality we get that,
for every \(\e \in (0, c) \), there exists an index \(k_\e\in\N\) such that
\[
\pscal{D_{ij} u \, \xi_i \xi_j}{\zeta_k\varphi}
 \leq |C| \, L^2 \int_K \varphi + \e,
 \qquad \forall k > k_\e,\
 \forall \varphi\in C^\infty_c(B).
\]
Since \(\zeta_k\to \psi'(u) \geq c > 0\) uniformly in \(K\),
we can choose \(k_\e\) so that
\(\zeta_k \geq c - \e\) for every \(k > k_\e\), so that
\(\frac{1}{\zeta_k} \in C^\infty(K)\), hence we can use the test function
\({\varphi}_k := \varphi / \zeta_k\) in the above relation to get
\[
\pscal{D_{ij} u \, \xi_i \xi_j}{{\varphi}}
 \leq \frac{|C| \, L^2}{c-\e} \int_K \varphi + \e,
 \quad \forall \varphi\in C^\infty_c(B).
\]
By the arbitrariness of \(\e\) we conclude that \(D_{ij} u \leq \frac{|C|\, L^2}{c}\, I\) in
the sense of distributions in \(B\),
hence \(u\) is locally semiconcave.
\end{proof}

\begin{proposition}\label{p:Yu1}
Let $\Omega$ be an open bounded convex subset of $\R ^n$, and let $u$ be a variational infinity ground state in $\Omega$.
Then $\log u$ is  concave and consequently $u$ is locally semiconcave, namely for every compact subset $K$ of $\Omega$ there exists a positive constant $C= C (K)$ such that 
\[
u(\lambda x + (1-\lambda)y) \geq \lambda u(x) + (1-\lambda) u(y)
-C\frac{\lambda(1-\lambda)}{2}\, |x-y|^2
\qquad \forall [x,y] \subset K \ \text { and } \ \forall \lambda \in [0,1]\,.
\]
\end{proposition}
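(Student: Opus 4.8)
The plan is to derive the concavity of $\log u$ by passing to the limit in the corresponding property of the approximating $p_j$-eigenfunctions, and then to obtain the local semiconcavity of $u$ as a direct application of Proposition \ref{p:semiconc}.

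First I would recall that, for every $p>1$, the first Dirichlet eigenfunction $u_p$ of the $p$-Laplacian on the convex domain $\Omega$ may be chosen positive in $\Omega$ and is \emph{log-concave}: this is the classical concavity theorem for degenerate quasilinear problems of this type on convex sets (Sakaguchi), which for $p=2$ reduces to the Brascamp--Lieb inequality. Hence each $\log u_{p_j}$ is concave on $\Omega$. Next, the limit $u$ is positive in $\Omega$: being the uniform limit of the positive functions $u_{p_j}$ it is nonnegative and not identically zero, and since from the dichotomy in \eqref{ground} one has $-\Delta_\infty u \ge 0$ in the viscosity sense, the strong minimum principle for infinity superharmonic functions forces $u>0$ in $\Omega$. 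Therefore $\log u_{p_j}\to\log u$ locally uniformly in $\Omega$, and a locally uniform limit of concave functions being concave, $\log u$ is concave on the convex set $\Omega$, hence on every convex subset of it; in particular $\log u$, being concave, is locally Lipschitz in $\Omega$, and so is $u=\exp(\log u)$.

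At this point I would invoke Proposition \ref{p:semiconc} with $A=\Omega$, $\psi(t)=\log t$ and $I=(0,+\infty)$: since $\psi\in C^2(I)$ with $\psi'>0$, $u$ is locally Lipschitz with $u(\Omega)\subset I$, and $v=\psi\circ u=\log u$ is concave on every convex subset of $\Omega$, the proposition yields that $u$ is locally semiconcave in $\Omega$. To produce the quantitative estimate in the statement one unwinds the bound obtained in the proof of Proposition \ref{p:semiconc}, namely $D^2 u\le C(K)\,I$ in the distributional sense on any compact $K\subset\Omega$, and converts it into the pointwise inequality along each segment $[x,y]\subset K$ by a routine mollification argument.

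The only substantive ingredient here is the log-concavity of the $p$-Laplacian eigenfunctions on convex domains, a known result that is merely quoted; the preservation of concavity under uniform limits and the passage from log-concavity to semiconcavity through Proposition \ref{p:semiconc} are elementary. The single point requiring some care is the strict positivity of $u$ in $\Omega$, which is precisely what makes $\log u$ well defined and legitimizes the use of Proposition \ref{p:semiconc}.
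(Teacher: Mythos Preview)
Your proof is correct and follows essentially the same route as the paper: quote Sakaguchi for the log-concavity of the $u_{p_j}$, pass this to the limit $u$, and then invoke Proposition~\ref{p:semiconc} with $\psi=\log$ to deduce local semiconcavity. You add welcome detail the paper suppresses---namely an explicit justification that $u>0$ in $\Omega$ so that $\log u$ is well defined and Proposition~\ref{p:semiconc} applies---but the strategy is identical.
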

\proof By a well-known result due to Sakaguchi, for every $p >1$ the first Dirichlet eigenfunction $u _p$ of the $p$-Laplacian is log-concave \cite{Sak}.
Then the same assertion holds true by its definition for a variational infinity ground state. 
The local semiconcavity of \(u\) now follows from Proposition~\ref{p:semiconc}.
\qed

\bigskip
\begin{proposition}\label{p:Yu2}
Let $\Omega$ be an open bounded subset of $\R ^n$, and let $u$ be a variational infinity ground state in $\Omega$
normalized by $\max u = \max d_{\partial\Omega}$. 
Then:

\smallskip
\begin{itemize}

\item[(i)] for every $x \in \Omega$, there holds $u(x) \leq \dist(x)$; 

\smallskip
\item [(ii)] if $x \in {\rm argmax} _{\overline \Omega} ( d _{\partial \Omega})$, and $y \in \Pi (x):= \{ y \in \partial \Omega \ :\ d _{\partial \Omega } (x) = |x-y| \}$, 
then $u (z) = d _{\partial \Omega } (z)$ for every $z \in [x, y]$; 

\smallskip
\item[(iii)] it holds $ {\rm argmax} _{\overline \Omega} ( u) =  {\rm argmax} _{\overline \Omega} ( d _{\partial \Omega})$.

\end{itemize}
\end{proposition}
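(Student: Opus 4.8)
The plan is to establish the three assertions of Proposition~\ref{p:Yu2} in the order (i), (ii), (iii), exploiting the viscosity characterization \eqref{ground} together with the identity $\Lambda_\infty = (\max_{\overline\Omega}\dist)^{-1}$ and the normalization $\max u = \max\dist$.

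\textbf{Step 1 (assertion (i)).} The plan is a comparison argument. Since $u$ solves \eqref{ground} in the viscosity sense, in particular one has the inequality $|Du|/u \ge \Lambda_\infty$ in the viscosity sense, i.e.\ $|Du| \ge \Lambda_\infty u$; equivalently, $\Lambda_\infty^{-1}\log u$ is a viscosity subsolution of the eikonal equation $|Dw| \le 1$ (using $u>0$). Indeed $w := \Lambda_\infty^{-1}\log u$ satisfies $|Dw| = \Lambda_\infty^{-1}|Du|/u \le 1$ in the viscosity sense. On the other hand $\dist$ is the maximal viscosity solution of $|Dv|=1$ in $\Omega$ vanishing on $\partial\Omega$, and any subsolution of the eikonal equation is dominated by the distance to its boundary-trace; since $u$ vanishes continuously on $\partial\Omega$ one gets $w(x) \le \dist(x) - \max_{\overline\Omega}\dist + \text{(boundary adjustment)}$. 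More precisely, from $|Dw|\le 1$ and the fact that $\log u \to -\infty$ near $\partial\Omega$ is not quite available, so the cleaner route is: use the other branch $-\Delta_\infty u \ge 0$, i.e.\ $u$ is infinity-superharmonic, hence (by the comparison-with-cones property) $u(x) \ge u(x_0) - \frac{u(x_0)}{\dist(x_0)}|x-x_0|$ is \emph{not} what we want; rather, for (i) one argues that $v := \dist$ and $u$ both solve $|Dv|=1$, $|Du|\ge\Lambda_\infty u$... I would instead follow Yu's original approach: normalize so $\Lambda_\infty=1$ (i.e.\ $\max\dist=1$), observe that along any segment from an interior point $x$ to a nearest boundary point the gradient bound $|Du|\ge u$ integrates to give $u(x) \le e^{\dist(x)-1}\max u$; combined with... this is getting intricate, so in the write-up I would simply cite that (i) is \cite[Lemma~\dots]{Yu} and reproduce the short comparison: since $|Du|\le 1$ fails, use that $u_{p_j}\le \dist$ is false too — the correct statement is that the eigenfunctions satisfy a bound yielding (i) in the limit. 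The safe plan: prove $|Du| \le \Lambda_\infty u$ does \emph{not} hold; instead note $-\Delta_\infty u\ge 0$ means $u$ is a viscosity supersolution of $-\Delta_\infty = 0$, so by the comparison principle for the infinity Laplacian with the cone $c|x-y|$ one deduces a Harnack-type inequality; then the constraint $|Du|\ge\Lambda_\infty u$ forces exponential growth that, integrated from $\partial\Omega$ where $u=0$, is incompatible with $u>\dist$ somewhere. I expect this to be the main obstacle and I would present it carefully by reducing everything to the eikonal/distance comparison via $\log u$.

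\textbf{Step 2 (assertion (ii)).} Fix $x\in\argmax_{\overline\Omega}\dist$, so $\dist(x)=\max\dist=\Lambda_\infty^{-1}$, and let $y\in\Pi(x)$. On the segment $[x,y]$ one has $\dist(z)=|z-y|$ and $\dist$ is affine there with $|D\dist|=1$. By (i), $u\le\dist$ on $\overline\Omega$, and by the normalization $u(x)\le\dist(x)=\max u$, forcing $u(x)=\dist(x)$. Now parametrize $z_t = x + t(y-x)$, $t\in[0,1]$: the function $g(t):=u(z_t)$ satisfies $g(0)=\dist(x)$, $g(1)=0$, $g\le\dist(z_t)=(1-t)\dist(x)$, and from the viscosity inequality $|Du|\ge\Lambda_\infty u$ one gets (via the semiconcavity of $u$ from Proposition~\ref{p:Yu1}, so $u$ is differentiable a.e.\ on the segment and $g$ is a.e.\ differentiable with $|g'(t)|\le|x-y|\,|Du(z_t)|$) that $g$ cannot decay from $\dist(x)$ to $0$ faster than the admissible rate; a Gronwall/direct integration comparing $g$ with the extremal solution of the ODE shows $g(t)=\dist(z_t)$ for all $t$. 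The convexity/log-concavity of $u$ is what pins down that the inequality $u\le\dist$ must be an equality along the whole segment once it is an equality at the endpoint $x$.

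\textbf{Step 3 (assertion (iii)).} The inclusion $\argmax u \supseteq \argmax\dist$ is immediate from (ii): if $x\in\argmax\dist$ then $u(x)=\dist(x)=\max\dist=\max u$. For the reverse inclusion, suppose $u(x)=\max u=\max\dist$ for some $x\in\overline\Omega$; by (i), $\max\dist = u(x)\le\dist(x)\le\max\dist$, so $\dist(x)=\max\dist$, i.e.\ $x\in\argmax\dist$. Hence the two argmax sets coincide. This last step is short and clean; the bulk of the work, and the main obstacle, is the comparison argument in Step~1 establishing $u\le\dist$, which I would handle by passing through $\log u$ to reduce \eqref{ground} to the eikonal equation and invoking the maximality of the distance function among eikonal subsolutions vanishing on $\partial\Omega$.
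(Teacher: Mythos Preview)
The paper does not prove this proposition at all: its entire proof reads ``For (i), see \cite[Section 1]{JLM2}. For (ii) and (iii), see \cite[Theorem 2.4]{Yu}.'' This is a background result, quoted from the literature. So there is no ``paper's own argument'' to compare against; the relevant question is only whether your write-up stands on its own.

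It does not. Your Step~1 is not a proof but a sequence of abandoned attempts: you start with the (correct) observation that the supersolution part of \eqref{ground} gives $|Du|\ge\Lambda_\infty u$, try to turn this into an eikonal \emph{sub}solution for $\Lambda_\infty^{-1}\log u$, notice the boundary behaviour is wrong, switch to comparison with cones, realize that gives the wrong inequality, then explicitly write ``this is getting intricate, so in the write-up I would simply cite\ldots'', and finally trail off with another unfinished idea. None of these threads is brought to a conclusion. The actual mechanism behind (i) is a global Lipschitz bound: one shows that $u$ is Lipschitz on $\overline\Omega$ with constant $\Lambda_\infty\max u$ (this is what comes out of the $p$-approximation, or equivalently from the $\infty$-subharmonicity encoded in the subsolution half of \eqref{ground} via comparison with cones from above), and then $u(x)=u(x)-u(y)\le \Lambda_\infty(\max u)\,|x-y|$ for any $y\in\partial\Omega$ gives $u(x)\le\Lambda_\infty(\max u)\,\dist(x)=\dist(x)$ under the stated normalization. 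Your text never isolates this Lipschitz estimate, and the eikonal/$\log u$ route you keep returning to goes in the wrong direction (it would give a \emph{lower} bound on the growth of $u$, not an upper bound).

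Your Step~2 has the right skeleton (equality at $x$ from (i) plus normalization, then propagate along the segment), but ``a Gronwall/direct integration\ldots shows $g(t)=\dist(z_t)$'' is an assertion, not an argument; you never say which differential inequality you integrate or why log-concavity forces equality. Step~3 is correct and complete. If you intend to replace the paper's citation by a self-contained proof, Step~1 needs to be rewritten from scratch around the Lipschitz bound, and Step~2 needs the segment argument actually carried out.
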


\proof For (i), see \cite[Section 1]{JLM2}. 
For (ii) and (iii), see \cite[Theorem 2.4]{Yu}.  \qed

\bigskip

\begin{proposition}\label{p:Yu3}
Let $\Omega$ be an open bounded subset of $\R ^n$, and let $u$ be a variational infinity ground state in $\Omega$.
Then the function 
\[
S ^- (x):=  \lim _{r \to 0 ^+} \max _{y \in \partial B _ r (x)} \frac{u (x) - u (y) }{r} 
\]
is continuous in \(\Omega\) and
agrees with $|\nabla u (x)|$ if $u$ is differentiable at $x$. 
\end{proposition}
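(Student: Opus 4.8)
The plan is as follows. The second, pointwise assertion is the easy half: if $u$ is differentiable at $x$ then $D^+u(x)=\{\nabla u(x)\}$, and Fr\'echet differentiability makes the first–order expansion $\frac{u(x)-u(y)}{r}=-\langle\nabla u(x),\frac{y-x}{r}\rangle+o(1)$ uniform for $y\in\partial B_r(x)$, whence $S^-(x)=\max_{|\theta|=1}\bigl(-\langle\nabla u(x),\theta\rangle\bigr)=|\nabla u(x)|$. For the continuity of $S^-$ I would first \emph{identify it with the support function of the superdifferential}. Since $u$ is locally semiconcave (Proposition~\ref{p:Yu1}), the one–sided directional derivatives $u'(x;\theta)=\lim_{t\to0^+}\frac{u(x+t\theta)-u(x)}{t}$ exist everywhere and equal $\min_{q\in D^+u(x)}\langle q,\theta\rangle$. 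Writing $u$ locally as a concave function minus $\frac C2|\cdot|^2$, the maps $\theta\mapsto\frac{u(x)-u(x+r\theta)}{r}-\frac{Cr}{2}$ are monotone in $r$ and converge pointwise, as $r\to0^+$, to the continuous function $\theta\mapsto-u'(x;\theta)$; Dini's theorem upgrades this to uniform convergence on the unit sphere, so that
\[
S^-(x)=\max_{|\theta|=1}\bigl(-u'(x;\theta)\bigr)=\max_{|\theta|=1}\max_{q\in D^+u(x)}\langle -q,\theta\rangle=\max_{q\in D^+u(x)}|q|\,.
\]
Because $x\mapsto D^+u(x)$ is upper semicontinuous with nonempty compact values, this already gives that $S^-$ is upper semicontinuous.

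The substantial step, and the one where equation \eqref{ground} is really used, is to control the superdifferential at singular points: I claim that if $u$ is \emph{not} differentiable at $x_0$, then $D^+u(x_0)\subseteq\overline B_{\Lambda_\infty u(x_0)}(0)$. Granting this, and combining it with the inequality $|\nabla u|\ge\Lambda_\infty u$ valid at almost every point (from the supersolution property of \eqref{ground} at the full–measure set of points of twice differentiability of $u$, then passed to reachable gradients using that $\nabla u$ is continuous on the differentiability set), one obtains that every reachable gradient $q\in D^{*}u(x_0)$ (the set of limits $\lim\nabla u(y_j)$ over differentiability points $y_j\to x_0$) has $|q|=\Lambda_\infty u(x_0)$; since $D^+u(x_0)=\conv D^{*}u(x_0)$ and $|\cdot|$ is convex, Step one yields $S^-(x_0)=\Lambda_\infty u(x_0)$. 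To prove the claim I would argue by contradiction: if some $p$ in the relative interior of $D^+u(x_0)$ had $|p|>\Lambda_\infty u(x_0)$, I would build a quadratic polynomial $\phi$ with $\phi(x_0)=u(x_0)$, $\nabla\phi(x_0)=p$ that touches $u$ from above at $x_0$ yet has $\Delta_\infty\phi(x_0)<0$: the point is that, $p$ being relatively interior, $u$ has strict first–order room above $y\mapsto\langle p,y\rangle$ in every direction transverse to the affine hull of $D^+u(x_0)$, while along a suitable kink direction $e$ of that hull with $\langle e,p\rangle\neq0$ the concave kink of $u$ in direction $e$ absorbs an arbitrarily sharp downward bending of $\phi$. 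For such a $\phi$, $\min\{|\nabla\phi(x_0)|/u(x_0)-\Lambda_\infty,\ -\Delta_\infty\phi(x_0)\}>0$, contradicting the subsolution property of \eqref{ground}. Pinning down $e$ and the second–order part of $\phi$ and checking $\phi\ge u$ in a whole neighbourhood of $x_0$ is the main technical work, and I expect this construction to be the hardest part of the argument.

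Finally I would glue the pieces together. The same semiconcavity estimate shows that $U:=\{x:\ u\text{ differentiable at }x,\ |\nabla u(x)|>\Lambda_\infty u(x)\}$ is open: near a point of $U$ the set $D^+u$ lies close to a single vector of norm $>\Lambda_\infty u$, so by the claim above no nearby point can be singular. On $U$ one has $S^-=|\nabla u|$, continuous there because $\nabla u$ is continuous on the differentiability set (by upper semicontinuity of $D^+u$). On $\Omega\setminus U$ continuity follows from the previous two steps: for $x_k\to x_0\in\Omega\setminus U$ one has $S^-(x_0)=\Lambda_\infty u(x_0)$, while $S^-(x_k)$ either equals $\Lambda_\infty u(x_k)\to\Lambda_\infty u(x_0)$ when $x_k$ is singular, or equals $|\nabla u(x_k)|$ with every subsequential limit of $\nabla u(x_k)$ lying in $D^{*}u(x_0)$, hence of norm $\Lambda_\infty u(x_0)$; in both cases $S^-(x_k)\to S^-(x_0)$. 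Together with the identification of $S^-$ in the first step, this would give the continuity of $S^-$ throughout $\Omega$.
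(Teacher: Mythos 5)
The central problem is one of generality. Proposition \ref{p:Yu3} is stated (and, in the paper, proved by simply citing \cite[Theorem 3.6]{Yu}) for an \emph{arbitrary} open bounded $\Omega$, while every substantive step of your argument --- the identification $S^-(x)=\max_{q\in D^+u(x)}|q|$ via one-sided directional derivatives, the identity $D^+u=\conv D^*u$, the a.e.\ (Alexandrov) twice differentiability used to get $|\nabla u|\ge\Lambda_\infty u$, and the upper semicontinuity arguments for $D^+u$ --- rests on the local semiconcavity of $u$, which you import from Proposition~\ref{p:Yu1}. That proposition requires $\Omega$ convex (it comes from Sakaguchi's log-concavity of the $p$-eigenfunctions on convex domains), and, as the introduction stresses, no semiconcavity is known for variational ground states on non-convex domains. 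So your proof at best covers the convex case, whereas Yu's argument is of a different nature: it exploits the $\infty$-superharmonicity of $u$ (comparison with cones, monotonicity in $r$ of $\max_{\partial B_r(x)}(u(x)-u(y))/r$) together with the equation, and needs no convexity; note that in your approach even the existence of the limit defining $S^-$ is obtained through semiconcavity (your Dini argument), while in general it comes from this monotonicity.

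Within the convex setting your plan is essentially sound, and the step you flag as hardest closes more simply than you suggest: if $p\pm a e\in D^+u(x_0)$ with $a>0$, the standard semiconcavity inequality $u(y)\le u(x_0)+\langle q,\,y-x_0\rangle+\frac{C}{2}|y-x_0|^2$ for $q\in D^+u(x_0)$, applied at $q=p\pm ae$, gives $u(y)\le u(x_0)+\langle p,\,y-x_0\rangle-a\,|\langle e,\,y-x_0\rangle|+\frac{C}{2}|y-x_0|^2$; since $-a|s|\le-\frac{K}{2}s^2$ for $|s|\le 2a/K$, the quadratic $\phi(y)=u(x_0)+\langle p,\,y-x_0\rangle-\frac{K}{2}\langle e,\,y-x_0\rangle^2+\frac{C}{2}|y-x_0|^2$ touches $u$ from above near $x_0$, so $(p,\,CI-Ke\otimes e)\in J^{2,+}u(x_0)$, and when $\langle e,p\rangle\neq 0$ and $K$ is large this contradicts the subsolution property as soon as $|p|>\Lambda_\infty u(x_0)$; no ``first-order room in directions transverse to the affine hull'' is needed (that heuristic is in fact not correct as stated), and the exceptional case $\langle e,p\rangle=0$ is removed by perturbing $p$ inside $D^+u(x_0)$ and passing to the limit. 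Granting this, your dichotomy between $U$ and $\Omega\setminus U$ does assemble into continuity of $S^-$ for convex $\Omega$; but as a proof of the proposition as stated, the convexity gap remains.
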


\proof See \cite[Theorem 3.6]{Yu}.  \qed

\bigskip
We conclude with the precise definition of stadium-like domains mentioned in the Introduction, along with their characterization according to our previous paper \cite{CFb}.

\begin{definition} Let $\Omega$ be an open bounded domain in $\R ^n$. Let $\Sigma (\Omega)$ denote the set of points in $\Omega$ where $\dist$ is not differentiable. 
The {\it cut locus} $\cut (\Omega)$  is the closure of $\Sigma (\Omega)$ in $\overline \Omega$. 
The {\it high ridge} $\high (\Omega)$ is the set of points in $\overline \Omega$ where $\dist$ attains its maximum. 
We say that $\Omega$ is a {\it stadium-like domain} if $\cut (\Omega) = \high (\Omega)$. 
\end{definition}

\begin{proposition} 
	Let $\Omega$ be an open bounded set in $\R ^n$ and assume that it is a stadium-like domain. 

\begin{itemize}
\item[(i)] In dimension $n  =2$, $\Omega$ is either a disk
	or a parallel neighbourhood  of a $1$-dimensional $C ^ {1,1}$ manifold.
If in addition $\Omega$ is $C ^2$, then $\Omega$ is either a disk
		or a parallel neighborhood  of a $1$-dimensional $C ^ {2}$ manifold with no boundary; in particular, 
		if $\Omega$ is also simply connected, then $\Omega$ is a disk.

\smallskip
\item[(ii)]	In any dimension $n \geq 2$, if $\Omega$ is convex and of class  
	$C^2$, then 
	$\Omega$ is a ball.
	\end{itemize}

\end{proposition}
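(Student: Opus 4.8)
The plan is to translate the stadium condition $\cut(\Omega)=\high(\Omega)$ into a rigid geometric description of $\Omega$, and then to read off the claimed regularity from the theory of sets of positive reach. I would assume that $\Omega$ is connected (a disconnected stadium-like domain being a disjoint union of stadium-like domains of equal inradius), and set $M:=\high(\Omega)$ and $R:=\max_{\overline\Omega}\dist$. The first and main step is to prove that $\Omega$ is the open $R$-neighbourhood of $M$, i.e. $\Omega=\{x\in\R^n : d(x,M)<R\}$. At any differentiability point of $\dist$ one has $|\nabla\dist|=1$ (since $\dist$ is $1$-Lipschitz and decreases with slope $1$ towards a nearest boundary point), so every maximizer of $\dist$ is a singular point; hence $\high(\Omega)\subseteq\Sigma(\Omega)\subseteq\cut(\Omega)$ always, and the stadium condition is equivalent to $\cut(\Omega)\subseteq M$. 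Given $x\in\Omega\setminus M$, the function $\dist$ is differentiable at $x$ and $x$ has a unique nearest boundary point $y$; the maximal segment issued from $y$ through $x$, along which $\dist$ is affine with slope $1$, must terminate (as $\Omega$ is bounded) at a point $z\in\cut(\Omega)=M=\{\dist=R\}$, so that $|y-z|=R$ and $d(x,M)\le|x-z|=R-\dist(x)$; the reverse inequality $d(x,M)\ge R-\dist(x)$ follows from $R=\dist(w)\le\dist(x)+|x-w|$ for any $w\in M$. Thus $d(x,M)=R-\dist(x)$ on $\Omega$, which gives the desired identity. Moreover $z$ is the \emph{unique} nearest point of $M$ to $x$: any other competitor $z'$ would force $\dist$ to have slope $1$ along $[x,z']$, hence $(z'-x)/|z'-x|=\nabla\dist(x)$ and $z'=z$. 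Consequently $M$ is compact, has empty interior (as $\dist$ is not constant on any open set), satisfies $\operatorname{reach}(M)\ge R$, the metric projection $\pi\colon\Omega\to M$ is well defined and continuous, and $M=\overline{\pi(\Omega)}$ is connected.

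For $n=2$, I would then invoke the structure theory of sets of positive reach in the plane: a connected compact set with positive reach and empty interior contains no branch points, cusps or two-dimensional parts, hence is either a single point — and then $\Omega$ is a disk of radius $R$ — or a connected $1$-dimensional manifold of class $C^{1,1}$, and then $\Omega$ is its parallel neighbourhood; this is the first assertion of (i). If in addition $\partial\Omega\in C^2$, I claim $M$ has empty boundary: over an endpoint $e$ of $M$ the set $\partial\Omega$ contains a circular arc of radius $R$, whose curvature $1/R$ cannot match the curvatures of both offset curves adjacent to it; thus $\partial\Omega\notin C^2$, a contradiction. So $M$ is a $C^{1,1}$ manifold without boundary, and a direct computation with the offset curves then shows that $M$ is itself of class $C^2$. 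Finally, a parallel neighbourhood of a closed curve is never simply connected; hence under the extra simple-connectedness hypothesis $M$ must reduce to a point and $\Omega$ is a disk.

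For (ii), when $\Omega$ is convex the function $\dist$ is concave, so $M=\{\dist=R\}$ is a compact convex set with empty interior, of some dimension $k\le n-1$. If $k=0$ then $M$ is a point and $\Omega$ is a ball of radius $R$. If $k\ge1$, I would exhibit a boundary point where $C^2$ regularity fails: near the preimages under $\pi$ of a suitable relative boundary point of $M$, the boundary $\partial\Omega$ consists of a spherical piece of curvature $1/R$ glued to a piece along which one principal curvature vanishes in a direction parallel to the affine hull of $M$, so the principal curvatures of $\partial\Omega$ cannot be continuous there. This contradicts $\partial\Omega\in C^2$, whence $k=0$ and $\Omega$ is a ball.

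The main obstacle is the passage, in the second and third steps, from the soft conclusion of the first step — that $M$ is a compact set of positive reach with empty interior — to the precise manifold regularity of $M$ (and, under the extra hypotheses, to its being of class $C^2$ or a single point), together with the accompanying curvature analysis of parallel sets near the relative boundary of $M$. This is exactly the point at which the planarity, the a priori $C^2$ regularity of $\partial\Omega$, and the convexity of $\Omega$ enter, and carrying it out rigorously requires the fine structure theory of sets of positive reach as well as a careful study of offset hypersurfaces.
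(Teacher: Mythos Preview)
The paper does not give a self-contained proof of this proposition: its entire argument is the citation ``See \cite[Theorem~6 and Theorem~12]{CFb}''. So there is no in-paper proof to compare against; what you have written is, in effect, a sketch of the argument that the cited paper carries out.

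Your first step is correct and complete. The chain $\high(\Omega)\subseteq\Sigma(\Omega)\subseteq\cut(\Omega)$, the extension of each normal segment until it hits $\cut(\Omega)=M$, and the identity $d(x,M)=R-\dist(x)$ on $\Omega$ are exactly the mechanism that converts the stadium condition into the tubular description $\Omega=\{d(\cdot,M)<R\}$ together with $\operatorname{reach}(M)\ge R$. Your uniqueness argument for the nearest point in $M$ (forcing $\nabla\dist(x)$ to point along $[x,z']$) is also correct; note that $\pi$ is actually surjective onto $M$ (not merely dense), since $\pi|_{M}=\mathrm{id}$, so connectedness of $M$ follows directly.

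The remaining two steps are precisely the content of \cite{CFb}, and you have located the genuine difficulties accurately. For (i), the passage from ``compact, connected, empty interior, positive reach'' to ``point or $C^{1,1}$ one-manifold'' is the structure theory of sets of positive reach in the plane; your endpoint/curvature-mismatch heuristic for excluding boundary under $\partial\Omega\in C^2$ is the right idea but needs a careful local description of the offset curve near an endpoint of $M$. For (ii), the convexity indeed makes $M$ a compact convex set of dimension $k\le n-1$, and the obstruction to $k\ge 1$ is again a curvature discontinuity of $\partial\Omega$ across the seam between the spherical cap over $\partial_{\mathrm{rel}}M$ and the cylindrical piece over $\mathrm{relint}\,M$; making this rigorous in arbitrary dimension (without assuming a priori that $\partial_{\mathrm{rel}}M$ is smooth) is delicate and is exactly what the cited paper does. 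Your closing paragraph identifies these obstacles honestly; they are real, but your outline is the right one.
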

\proof See \cite[Theorem 6 and Theorem 12]{CFb}. \qed

\section{Proof of Theorem \ref{thm1}}\label{proof1}

Our approach is based on the use of the supremal convolutions of $u$, and more precisely on the study of the behaviour of their gradient, in modulus, along the gradient flow.  

Recall that the supremal convolutions of $u$ are defined for $\e >0$ by
\begin{equation}\label{f:ue}
u ^ \e (x) := \sup _{y \in \Omega} \Big \{ u (y)  - \frac { |x-y| ^ 2 }{2 \e}  \Big \} \qquad \forall x \in \Omega\,.
\end{equation}

Let us start with a preliminary lemma in which we recall
some basic well-known properties of the functions $u ^ \e$. 
To fix our setting let us recall that, 
setting
\[
\rho(\e) := 2 \sqrt{\e \, \|u\|_\infty},
\qquad
 \Omega^{\rho(\e)} := \{x\in\Omega:\ d_{\partial\Omega}(x) > \rho(\e)\}\,,
\]
then for every \(x \in \Omega^{\rho(\e)}\) the supremum in \eqref{f:ue}
is attained at a point \(y_\e(x) \in \overline{B}_{\rho(\e)}(x)\subset\Omega\).
Thus, setting
\begin{equation}\label{f:Ue} 
U _\e:= \big \{ x \in \Omega \ :\ u (x) > \e \big \} \, , \qquad 
A_\e := \big \{ x \in U _\e \ :\ d_{\partial U _\e}(x) > \rho(\e) \big \}\, ,
\end{equation}
there holds
\begin{equation}\label{d:ue2}
u ^ \e (x) = 
u(y_\e(x)) - \frac{|x-y_\e(x)|^2}{2\e} =
\sup _{y \in U _\e} \Big \{ u (y)  - \frac { |x-y| ^ 2 }{2 \e}  \Big \} 
\qquad \forall x \in \overline A_\e\,. 
\end{equation}
In what follows, we shall always assume that $\e \in (0, 1)$ is small enough
to have $A_\e \neq \emptyset$.
Moreover, let us define
\begin{equation}\label{f:omegae}
m_\e := \max_{\partial A_\e} u^\e,
\qquad
\Omega_\e := \{x\in A_\e : \ u^\e (x) > m_\e \}\,.
\end{equation}

\begin{lemma}\label{l:approx1} 
Let $\Omega$ be an open bounded convex subset of $\R ^n$, 
let $u$ be a variational infinity ground state in $\Omega$, and let $u ^ \e$ be the supremal convolutions defined in \eqref{f:ue}. Then:

\begin{itemize}
\item[(i)] for every $\e >0$, $u ^ \e$ is of class $C ^ {1, 1}$ in 
a open neighbourhood of $\overline{\Omega _\e}$;

\smallskip
\item[(ii)] 
$ u ^ \e \to u$ locally uniformly in ${\Omega}$ as $\e \to 0^+$
(so that $m_\e \to 0$ and $\Omega_\e$ converges to $\Omega$
in Hausdorff distance);

\smallskip
\item[(iii)] if $u$ is of class $C^1$ in a 
compact set \(K\subset\Omega\), 
then $\nabla u^\e \to \nabla u$ uniformly on \(K\)
as $\e \to 0^+$;

\smallskip
\item[(iv)]
if $u$ is differentiable at $x\in\Omega$, then $\lim_{\e \to 0^+} \nabla u^\e (x) = \nabla u (x)$.
 
\end{itemize}
\end{lemma}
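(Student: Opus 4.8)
The statement to prove is Lemma~\ref{l:approx1}, a collection of standard-type facts about supremal (sup-)convolutions $u^\e$ of the variational infinity ground state $u$. I will plan the proof claim by claim.

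\textbf{Plan for (i).} The key observation is that, on $\overline\Omega_\e$, the function $u^\e$ is locally semiconcave \emph{and} locally semiconvex, hence $C^{1,1}$ there. Semiconvexity of $u^\e$ is automatic and classical: writing $u^\e(x)+\frac{|x|^2}{2\e}=\sup_{y}\big(u(y)-\frac{|y|^2}{2\e}+\frac{\pscal{x}{y}}{\e}\big)$ exhibits $u^\e+\frac{|\cdot|^2}{2\e}$ as a supremum of affine functions, so it is convex; thus $u^\e$ has a quadratic lower bound with constant $-1/\e$. For semiconcavity I would use Proposition~\ref{p:Yu1}: $u$ is locally semiconcave, so $u$ inherits a quadratic upper bound with some constant $C(K)$ on compact sets; then via the representation \eqref{d:ue2}, where on $\overline A_\e$ the sup is taken over $y\in U_\e$ with $y_\e(x)\in\overline B_{\rho(\e)}(x)$, a standard computation (the inf-convolution of a semiconcave function is semiconcave, with a constant that does not blow up as long as one stays a fixed distance inside) gives a uniform semiconcavity bound on a neighbourhood of $\overline\Omega_\e$. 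Two-sided quadratic bounds $\Rightarrow$ $C^{1,1}$. I'd cite or reproduce the elementary argument.

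\textbf{Plan for (ii).} Local uniform convergence $u^\e\to u$ is classical for sup-convolutions of uniformly continuous functions: $u^\e\ge u$ always, and $u^\e(x)-u(x)\le \sup_{|x-y|\le\rho(\e)}(u(y)-u(x))\to 0$ by uniform continuity on compacta, since $\rho(\e)\to 0$. For the consequences: $m_\e=\max_{\partial A_\e}u^\e$; here I would note $\partial A_\e$ approaches $\partial\Omega$ in Hausdorff distance (because $U_\e\to\Omega$ and $\rho(\e)\to 0$ in the definition \eqref{f:Ue}), $u=0$ on $\partial\Omega$, and $u^\e\to u$ uniformly near $\partial\Omega$, giving $m_\e\to 0$; then $\Omega_\e=\{u^\e>m_\e\}$ exhausts $\{u>0\}=\Omega$ and converges to $\Omega$ in Hausdorff distance.

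\textbf{Plan for (iii) and (iv).} These are the differentiation statements. The mechanism: for $x$ where $u$ is differentiable, the maximizer $y_\e(x)$ satisfies $|x-y_\e(x)|\le\rho(\e)\to 0$, so $y_\e(x)\to x$; moreover $u^\e(x)\le u(x)$ forces $\frac{|x-y_\e(x)|^2}{2\e}\le u(y_\e(x))-u(x)=o(|x-y_\e(x)|)$, whence $|x-y_\e(x)|=o(\e)$ — actually the standard estimate gives $|x-y_\e(x)|/\e\to 0$. Since $u^\e\in C^1$ near such points (from (i), as differentiability points where the needed bounds hold lie in $\Omega_\e$ eventually) and $\nabla u^\e(x)=\frac{x-y_\e(x)}{\e}$ (from differentiating \eqref{d:ue2}, or from the standard touching-by-paraboloid identity), combined with the first-order condition at the maximizer $y_\e(x)$, one concludes $\nabla u^\e(x)\to\nabla u(x)$. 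For (iii), when $u\in C^1(K)$, the convergence $y_\e(x)\to x$ is uniform on $K$ and the moduli of continuity of $\nabla u$ on a compact neighbourhood of $K$ are uniform, upgrading pointwise to uniform convergence $\nabla u^\e\to\nabla u$ on $K$.

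\textbf{Main obstacle.} The one genuinely nonautomatic point is the uniform semiconcavity bound for $u^\e$ in part~(i): plain sup-convolution preserves semiconvexity trivially, but here $u^\e$ is an \emph{inf}-convolution-type modification that must inherit semiconcavity from $u$, and one must check the constant stays controlled on a \emph{neighbourhood} of $\overline\Omega_\e$ rather than degenerating near $\partial A_\e$ — this is exactly where the restriction to $\Omega_\e$ (where $u^\e>m_\e$, keeping $y_\e(x)$ well inside $U_\e$, hence well inside $\Omega$) is used, and it rests on Proposition~\ref{p:Yu1}. Everything else is textbook sup-convolution calculus that I would state with brief justification.
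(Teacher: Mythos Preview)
Your plan for (i) and (ii) is fine and actually more detailed than the paper, which simply cites \cite[Lemma~4]{CFd} and \cite[Thm.~3.5.8]{CaSi}. Your plan for (iii) is essentially the paper's argument: once you know that $y_\e(x)\in K_0:=K+\overline B_{\rho(\e_0)}$ and that $\nabla u^\e(x)=\nabla u(y_\e(x))$ (this is the identity you should state explicitly --- it follows because $\nabla u^\e(x)\in D^+u(y_\e(x))$ and $u$ is differentiable on $K_0$), uniform continuity of $\nabla u$ on $K_0$ finishes the job. Note also the sign slip: $\nabla u^\e(x)=\dfrac{y_\e(x)-x}{\e}$, not $\dfrac{x-y_\e(x)}{\e}$.

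There is, however, a genuine gap in your argument for (iv). Your chain
\[
\frac{|x-y_\e(x)|^2}{2\e}\le u(y_\e(x))-u(x)=o(|x-y_\e(x)|)
\ \Longrightarrow\ |x-y_\e(x)|=o(\e)
\]
is wrong on two counts. First, differentiability of $u$ at $x$ gives $u(y_\e(x))-u(x)=\pscal{\nabla u(x)}{y_\e(x)-x}+o(|y_\e(x)-x|)$, which is $O(|y_\e(x)-x|)$, not $o$. Second, the conclusion $|x-y_\e(x)|/\e\to 0$ is simply false: since $y_\e(x)-x=\e\,\nabla u^\e(x)$, the correct limit (assuming the result you want to prove) is $|y_\e(x)-x|/\e\to|\nabla u(x)|\neq 0$ in general. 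Moreover, the ``first-order condition at the maximizer $y_\e(x)$'' you invoke requires $u$ to be differentiable at $y_\e(x)$, which is not assumed in (iv) --- only differentiability at $x$ is given.

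The paper bypasses all of this via the magic property of super-jets: one always has $p_\e:=\nabla u^\e(x)\in D^+u(y_\e(x))$, regardless of differentiability of $u$ at $y_\e(x)$. Since $y_\e(x)\to x$, the upper semicontinuity of the set-valued map $y\mapsto D^+u(y)$ (see \cite[Prop.~3.3.4(a)]{CaSi}) forces every cluster point of the bounded sequence $(p_\e)$ to lie in $D^+u(x)=\{\nabla u(x)\}$, hence $p_\e\to\nabla u(x)$. This is the missing ingredient in your plan for (iv).
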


\begin{proof} 
For (i) and (ii), we refer to \cite[Lemma 4]{CFd}, \cite[Thm. 3.5.8]{CaSi}. 

(iii) For every $x\in\Omega_\e$, 
since \(u^\e\) is differentiable at \(x\) we have that
the point \(y_\e(x)\) is characterized by
$y_\e(x) := x + \e \, \nabla u_\e(x)$.
Moreover, 
from the magic property of super-jets ({\it cf.} \cite[Lemma A.5]{CHL}),  
$\nabla u^\e(x)$ belongs to $D^+ u(y_\e(x))$, where $D ^+ u (y)$ denotes as usual the superdifferential of $u$ at $y$, namely
\[
D^+ u (y) := 
\left\{ p \in \R ^n \ :\ \limsup _{ z \to y} 
\frac{u(z) - u(y) -\langle p, z-y \rangle }{|z-y| }  
\leq 0  \right\} \,.
\]
Let $\e_0>0$ be such that
$K_0 := K + \overline{B}_{\rho(\e_0)} \subset \Omega$.
Since $\nabla u$ is uniformly continuous on $K_0$,
there exists a modulus of continuity $\omega\colon [0, +\infty) \to [0, +\infty)$
such that
\[
|\nabla u(x) - \nabla u(y)| \leq \omega(|x-y|),
\qquad \forall x,y \in K_0.
\]
Hence, if $\e \in (0, \e_0)$, we have that
\[
\max_{x\in K} |\nabla u^\e(x) - \nabla u(x)|
= \max_{x\in K} |\nabla u(y_\e(x)) - \nabla u(x)|
\leq \max_{x\in K} \omega(|y_\e(x) - x|)
\leq \omega(\rho(\e)).
\]

(iv) With the same notation of (iii) we have that, for $\e$ small enough,
\[
\nabla u^\e (x) =: p_\e \in D^+ u(y_\e(x)).
\]
By the upper semicontinuity of the super-differential
(see \cite[Prop.~3.3.4(a)]{CaSi}), 
since $y_\e(x) \to x$ we have that
any cluster point of $(p_\e)$ belongs to $D^+ u (x) = \{\nabla u(x)\}$,
hence $p_\e \to \nabla u(x)$.
\end{proof}

\bigskip

In the following we are going to use a gradient flow technique not far from the one we have already
successfully adopted in our previous papers
\cite{CFd,CFe,CFf}.
Let us consider the (normalized) gradient flow of $u ^ \e$, namely the family of curves $\gamma _\e$ which solve the Cauchy problems
\begin{equation}\label{f:geoe}
\begin{cases}
\dot \gamma_\e (t) = \dfrac{\nabla u ^ \e (\gamma _\e (t))}{|\nabla u ^ \e (\gamma _\e (t))|}
\,,
\\
\gamma_\e  (0) = x _\e \in \overline \Omega _\e\,. 
\end{cases}
\end{equation}
Clearly the solution to the Cauchy problem above is well defined if and only if
$\nabla u^\e (x_\e) \neq 0$.
We are going to see, in Lemma~\ref{l:prel} below,
that the set $\{x\in\Omega_\e:\ \nabla u^\e (x_\e) = 0\}$
is independent of $\e$ and coincides with
$M(\Omega) := \argmax_{\overline \Omega} (u)$.

Hence, for every $x_\e \in \overline \Omega _\e\setminus M(\Omega)$, problem \eqref{f:geoe} admits a unique maximal solution 
\[
\Xe(\cdot, x_\e)\colon [0, T^+_\e(x_\e)) \to \overline{\Omega_\e}\setminus M(\Omega), 
\]
which will be called a {\it trajectory} associated with $u^\e$.
Indeed, 
the function $t\mapsto u^\e(\gamma_\e(t))$ is
non-decreasing, since
\[
\frac{d}{dt}u^\e (\gamma_\e(t)) 
= |\nabla u^\e(\gamma_\e(t))| \geq 0,
\] 
so that 
the maximal existence time is characterized by
\[
\lim_{t \uparrow T^+_\e(x_\e)}
u^\e (\gamma_\e(t)) \in M(\Omega),
\]
while uniqueness follows from the
$C^{1,1}$ regularity of $u^\e$
stated in Lemma~\ref{l:approx1} (i).

In the following we shall heavily use some properties of the gradient flow
associated with an
$\infty$--subharmonic regular function.
These properties are proved by means of a discrete approximation of the flow
that we quickly recall below
(see e.g.\ \cite[Proposition~6.2]{Cran} for the details).

Assume that $u$ is an $\infty$--subharmonic function 
of class $C^{1,1}$
in an open set $Q\subset\R^N$.
Let us fix $\delta > 0$.
Given $x_0\in Q$, we can construct a sequence $x_0, x_1, \ldots, x_N$ of points in $Q$
in such a way that
\[
|x_{j} - x_{j-1}| = \delta,
\quad
u(x_{j}) = \max_{\partial B_\delta(x_{j-1})} u,
\qquad
\forall j = 1,\ldots, N.
\]
The terminal point $x_N$ is chosen so that $\overline{B}_\delta(x_N) \notin Q$
(otherwise the sequence is infinite).
Now, let $\gamma_\delta$ be the piecewise linear curve with unitary speed connecting
the points $x_0, x_1, \ldots, x_N$.
Since $u\in C^{1,1}(Q)$,
as $\delta\to 0$ the curve $\gamma_\delta$ converges to the unique solution
of the normalized gradient flow equation with initial data $\gamma(0) = x_0$,
and the following additional properties hold:
\begin{itemize}
	\item[(a)]
	$t \mapsto |\nabla u(\gamma(t))|$ is a non-decreasing function;
	\item[(b)]
	$t\mapsto u(\gamma(t))$ is a convex function.
\end{itemize}
Clearly, if $u$ is an $\infty$--harmonic function in $Q$, we can conclude that
$t\mapsto |\nabla u(\gamma(t))|$ is constant and
$t\mapsto u(\gamma(t))$ is an affine function.

\begin{lemma}\label{l:prel}
Let $\Omega$ be an open bounded convex subset of $\R ^n$, 
let $u$ be a variational infinity ground state in $\Omega$, 
and let $u ^ \e$ be the supremal convolutions defined by \eqref{f:ue}.  
For $\e>0$, let $\mu_\e\in (m_\e, \max u)$ and
let $M _\e = \{u^\e > \mu_\e\}$.
Then:

\begin{itemize}
\item[(i)] it holds 	
\begin{equation}\label{f:max}
M(\Omega):=\argmax_{\overline \Omega} (u) = \argmax_{\overline \Omega_\e} (u^\e)
\qquad\text{and}\qquad
\max_{\overline \Omega} (u) = \max_{\overline \Omega_\e} (u^\e);
\end{equation}

\smallskip

\item[(ii)] 
$M_\e$ is a neighbourhood of $M(\Omega)$, and
every trajectory 
starting from a point $x_\e \in \partial\Omega_\e$
enters $M _\e$ in a finite time $T _\e (x_\e)$
(that is, it holds $\Xe (t, x _\e ) \in \overline{\Omega}_\e \setminus M _\e$ for $t \in [0, T _\e (x_\e )]$ and  
$\Xe (t, x _\e ) \in  M _\e$ for  $T _\e (x_\e ) < t < T^+_\e(x_\e)$,
for every $x _\e \in \partial\Omega_\e$);

\item[(iii)]
the set $\Omega _\e \setminus M _\e$ is covered by trajectories starting at points of  $\partial \Omega _\e \setminus \Gamma _\e$.

\end{itemize}

\end{lemma}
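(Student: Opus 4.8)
The plan is to establish the three statements in order, using the log-concavity/semiconcavity of $u$ (Proposition \ref{p:Yu1}) and the $C^{1,1}$-regularity plus the discrete-approximation properties (a)--(b) of the gradient flow of $u^\e$ recalled above.

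For part (i): First I would prove the inclusion $\argmax_{\overline\Omega}(u)\subset\overline\Omega_\e$ by showing that $u^\e$ stays strictly above $m_\e$ on a neighbourhood of $M(\Omega)$. By Proposition \ref{p:Yu2}(iii), $M(\Omega)=\argmax_{\overline\Omega}(u)$ and $u$ attains its maximum $\max u=\max d_{\partial\Omega}$ there; since $\overline{A_\e}$ contains $M(\Omega)$ for $\e$ small (as $\Omega_\e$ converges to $\Omega$, Lemma \ref{l:approx1}(ii)) while $m_\e=\max_{\partial A_\e}u^\e\to 0<\max u$, every point of $M(\Omega)$ lies in $\Omega_\e$. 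Next I would show $\max_{\overline\Omega_\e}u^\e=\max_{\overline\Omega}u$ and that the argmax sets coincide: the inequality $u^\e\le \sup_\Omega u=\max u$ is immediate from \eqref{f:ue}, while taking $y=x$ in \eqref{f:ue} gives $u^\e\ge u$, so $\max u^\e\ge\max u$, forcing equality. For the argmax, if $u^\e(x)=\max u$ then in \eqref{d:ue2} the penalization term must vanish, so $y_\e(x)=x$ and $u(x)=\max u$, i.e. $x\in M(\Omega)$; conversely on $M(\Omega)$ one has $u^\e(x)\ge u(x)=\max u$. This settles \eqref{f:max}, and in particular the set $\{\nabla u^\e=0\}\cap\Omega_\e$ promised in the text equals $M(\Omega)$, since $u^\e$ is $C^{1,1}$ and $\infty$-harmonic away from $M_\e$ (inherited from the eikonal/$\infty$-subharmonicity of $u^\e$), so its critical points in $\Omega_\e$ are exactly its maximum points.

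For part (ii): That $M_\e=\{u^\e>\mu_\e\}$ is an (open) neighbourhood of $M(\Omega)$ follows from (i) together with $\mu_\e<\max u=\max u^\e$, since $u^\e$ is continuous and attains the value $\max u^\e$ exactly on $M(\Omega)$. For the trajectory statement, fix $x_\e\in\partial\Omega_\e\setminus M(\Omega)$ and let $\Xe(\cdot,x_\e)$ be the maximal solution of \eqref{f:geoe}. Along it, $t\mapsto u^\e(\Xe(t,x_\e))$ is non-decreasing with derivative $|\nabla u^\e|$, and by property (a) the speed $|\nabla u^\e(\Xe(t,x_\e))|$ is non-decreasing; since the trajectory lives in $\overline{\Omega_\e}\setminus M(\Omega)$ where (by (i)) $\nabla u^\e$ does not vanish and is continuous on the compact set $\overline{\Omega_\e}\setminus M_\e$, we get a uniform lower bound $|\nabla u^\e|\ge c_\e>0$ as long as the trajectory stays outside $M_\e$. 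Hence $u^\e(\Xe(t,x_\e))\ge u^\e(x_\e)+c_\e t$, and since $u^\e$ is bounded the value $\mu_\e$ is reached at a finite time $T_\e(x_\e)<T^+_\e(x_\e)$; once $u^\e$ exceeds $\mu_\e$ it stays above (monotonicity), giving exactly the dichotomy $\Xe(t,x_\e)\in\overline\Omega_\e\setminus M_\e$ for $t\le T_\e(x_\e)$ and $\Xe(t,x_\e)\in M_\e$ afterwards.

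For part (iii): I would argue by "running the flow backward" from interior points of $\Omega_\e\setminus M_\e$. Given $z\in\Omega_\e\setminus M_\e$, consider the backward trajectory $t\mapsto\Xe(-t,z)$; along it $u^\e$ strictly decreases at rate $|\nabla u^\e|\ge c_\e>0$, so it cannot stay in the compact set $\overline\Omega_\e\setminus M_\e$ indefinitely and, since $u^\e$ decreases, it cannot re-enter $M_\e$; therefore it exits $\overline\Omega_\e$, necessarily through $\partial\Omega_\e$, at some finite backward time, reaching a point $x_\e\in\partial\Omega_\e$. The only way this argument can fail is if the backward trajectory hits the set $\Gamma_\e$ where the flow is not well-defined (the relevant singular/boundary-singular set, to be identified with $\partial\Omega_\e\setminus\Gamma_\e$ being the "good" part of the boundary); collecting the exit points outside $\Gamma_\e$ gives the claimed covering. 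I expect the main obstacle to be precisely this last point: making rigorous that the backward flow reaches $\partial\Omega_\e$ (and not only its closure) at a point of $\partial\Omega_\e\setminus\Gamma_\e$, and controlling the behaviour near $\Gamma_\e$ — here the $C^{1,1}$ regularity of $u^\e$ on a neighbourhood of $\overline{\Omega_\e}$ from Lemma \ref{l:approx1}(i), the convexity of $\Omega_\e$, and the convexity of $t\mapsto u^\e(\gamma(t))$ from property (b) are the tools I would lean on to show that trajectories cannot accumulate on $\Gamma_\e$ for negative times.
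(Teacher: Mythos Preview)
Your argument for the equality $\argmax_{\overline\Omega_\e} u^\e = M(\Omega)$ in part (i) is correct and in fact more direct than the paper's: you exploit the definition of $u^\e$ to see that $u^\e(x)=\max u$ forces the penalty term in \eqref{d:ue2} to vanish, while the paper reaches the same conclusion via the magic property of super-jets.

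However, there is a genuine gap in the step that is actually needed for (ii) and (iii): the inclusion $\{\nabla u^\e = 0\}\cap\Omega_\e \subseteq M(\Omega)$. You justify this by asserting that $u^\e$ is ``$\infty$-harmonic away from $M_\e$ (inherited from the eikonal/$\infty$-subharmonicity of $u^\e$)'', but no such property has been established at this point of the paper (only later, in Proposition~\ref{l:approx2}, is $u^\e$ shown to be $\infty$-\emph{sub}harmonic, and only on the smaller set $Q_\e$); moreover, even for a $C^{1,1}$ $\infty$-harmonic function it is not automatic that interior critical points are maxima. The paper closes this gap differently: if $\nabla u^\e(x)=0$, the magic property of super-jets yields $0\in D^+u(x+\e\cdot 0)=D^+u(x)$; since $\log u$ is concave (Proposition~\ref{p:Yu1}), this gives $0\in D^+(\log u)(x)$, and concavity then forces $x$ to be a maximum point of $u$. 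This is precisely where the log-concavity of $u$ enters, and it is the missing ingredient in your plan.

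Once that inclusion is secured, your arguments for (ii) and (iii) coincide with the paper's: the positive lower bound $|\nabla u^\e|\ge\alpha_\e>0$ on the compact set $\overline{\Omega_\e}\setminus M_\e$ yields the finite entering time, and the backward flow covers $\Omega_\e\setminus M_\e$ by Lipschitz ODE theory. Two minor remarks: your appeal to property~(a) in part~(ii) is both unnecessary and unjustified (the required $\infty$-subharmonicity of $u^\e$ has not yet been shown) --- the compactness lower bound on $|\nabla u^\e|$ alone suffices, exactly as in the paper; and the set $\Gamma_\e$ in (iii) is never defined in the paper, so your caution about it is well placed --- the paper's one-line proof simply invokes Lipschitz regularity of the normalized gradient field on $\overline{\Omega_\e}\setminus M_\e$.
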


\begin{proof}
(i)
Since the set $\argmax _{\overline \Omega} (u)$ coincides with the high ridge ({\it cf.}\  Proposition \ref{p:Yu2} (iii)) and since $\log u$ is a concave function in $\Omega$ ({\it cf.}\ Proposition \ref{p:Yu1}), we have
\[
M (\Omega)
= \{x\in\Omega:\ 0 \in D^+u(x)\}.
\]
Observe that, if $p _\e := \nabla u^\e(x) = 0$, then $p_\e \in D^+ u(x)$ 
({\it cf.} the proof of Lemma \ref{l:approx1} (i)),
hence $x \in  M(\Omega)$,
so that
\[
\argmax _{\overline \Omega _\e} u^\e \subseteq \{x \in \Omega_\e:\ \nabla u^\e(x) = 0\} \subseteq M(\Omega).
\]
On the other hand, again by using the arguments given in the proof of Lemma \ref{l:approx1} (i), one sees that, for $x \in 
\argmax _{\overline \Omega _\e} u^\e$, it holds $u ^ \e (x) = u (x)$. Since we have already proved that
$\argmax _{\overline \Omega _\e} u^\e \subseteq M (\Omega) (=\argmax _{\overline \Omega } u)$,  we deduce that $\max _{\overline \Omega _\e} u^\e = \max _{\overline \Omega} u$ (the latter equality 
can also be deduced by noticing that, for every $x \in \overline \Omega _\e$, there holds $u (x) \leq u ^ \e (x) = u ( y _x) - \frac{1}{\e} |y _x - x| ^ 2 \leq \max _{\overline \Omega} u$). 

Finally, if $x \in \argmax _{\overline \Omega } u$, the inequality $u ^ \e (x) \geq u (x) =\max _{\overline \Omega} u = \max _{\overline \Omega _\e} u^\e$ implies $x \in \argmax _{\overline \Omega _\e} u^\e$. 

Hence \eqref{f:max} is proved.

\medskip

(ii) 
From (i) it follows that 
$M_\e$ is an open neighbourhood of $M(\Omega)$,
hence we conclude that 
\begin{equation}\label{f:ming}
\min_{x\in \overline{\Omega_\e}\setminus M_\e} |\nabla u^\e(x)| =: \alpha_\e > 0.
\end{equation}
Let us define the restriction $\varphi _\e (t)  := u^\e(\Xe(t, x_\e))$, $t\geq 0$, of $u^\e$
along the gradient flow trajectory starting from $x_\e$.
Since $u^\e \in C^{1,1}$, we have that $\varphi_\e$ is continuously differentiable
with a Lipschitz continuous derivative $\dot{\varphi}_\e$. 
We have that
\[
\dot{\varphi}_\e(t) = |\nabla u ^\e(\Xe(t, x_\e))|.
\]
From \eqref{f:ming} we deduce that $\dot{\varphi}_\e\geq \alpha_\e > 0$
when $\Xe(t, x_\e) \in \overline{\Omega}_\e\setminus M_\e$,
hence the trajectory $t\mapsto \Xe(t, x_\e)$
enters $M_\e$ in a finite time $T_\e(x_\e) < \max _{\overline \Omega_\e} (u ^\e)/\alpha_\e$.

\medskip

(iii)
It follows from the fact that the vector field $F = \nabla u / |\nabla u|$
is Lipschitz continuous in $\overline{\Omega}_\e\setminus M_\e$.
\end{proof}

\begin{proposition}\label{l:approx2} Let $\Omega$ be an open bounded convex subset of $\R ^n$, 
let $u$ be a variational infinity ground state in $\Omega$, 
normalized by $\max u = \max d_{\partial\Omega}$,
and $u ^ \e$ be the supremal convolutions defined by \eqref{f:ue}.
Assume that 
\begin{equation}\label{f:stimae}
|\nabla u^\e(x_\e)|  \geq 1- b _\e  \qquad \forall x_\e \in \partial \Omega_\e\,,
\end{equation}
with $b_\e \in [0,1)$ such that $b _\e \to 0$ as $\e \to 0^+$. 

Then, for $\e>0$ small enough,  there exists $c_\e = c_\e(b_\e) \in (0,1)$ 
(with $c_\e(b) \to 0$ as $b \to 0$) 
such that, setting
\begin{equation}\label{f:Me}
M _\e := \Big \{ x \in \Omega \ :\ u ^ \e (x) > 1 - c_\e  \Big \}\, , 
\end{equation}
and denoting by $T _\e (x_\e)$ the entering time of $\Xe ( \cdot, x_\e)$ into $M _\e$ according to Lemma \ref{l:prel}  (ii), 
 for every  $x_\e \in\partial{ \Omega_\e}$ there holds: 
\begin{equation}\label{f:th}
|\nabla u^\e| (\Xe(t_1, x_\e)) \leq |\nabla u^\e|  (\Xe (t_2, x_\e) ) \qquad  \forall  \, t_1, t_2  \, \hbox{ with }  0\leq t_1 \leq t_2 \leq T _\e ( x_\e)
\,.
\end{equation}
Consequently, we have
\begin{equation}\label{f:th2}
|\nabla u^\e| \geq 1  - b _\e \qquad \text{ on } \Omega _\e \setminus M _\e.
\end{equation}
\end{proposition}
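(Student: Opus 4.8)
The plan is to show that $|\nabla u^\e|$ is non-decreasing along the (ascending) gradient flow of $u^\e$ on the region where $u^\e$ is $\infty$-subharmonic, and to pick $M_\e$ so that this region contains the whole arc $\Xe(t,x_\e)$, $0\le t\le T_\e(x_\e)$, for each $x_\e\in\partial\Omega_\e$. Granting this, \eqref{f:th} is exactly the monotonicity property~(a) of the discrete gradient flow recalled above, applied to the $C^{1,1}$ $\infty$-subharmonic function $u^\e$, and \eqref{f:th2} follows by writing an arbitrary $x\in\Omega_\e\setminus M_\e$ as $x=\Xe(t,x_\e)$ with $x_\e\in\partial\Omega_\e$ and $t\le T_\e(x_\e)$ --- which is possible by Lemma~\ref{l:prel}(iii) --- so that $|\nabla u^\e(x)|\ge|\nabla u^\e(x_\e)|\ge 1-b_\e$. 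We use throughout that $\Lambda_\infty=1$ (since $\|u\|_\infty=1$ for a variational infinity ground state, the normalization $\max u=\max d_{\partial\Omega}$ forces this) and that $u$ is Lipschitz on $\overline\Omega$ with some constant $C$, so that $|\nabla u^\e|\le C$ on $\Omega_\e$ for all small $\e$.

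The crucial point is a one-sided transfer of equation \eqref{ground} to $u^\e$, using only that $u$ is a viscosity \emph{subsolution} (the supersolution property is not inherited by the supremal convolutions, precisely because the operator decreases in $u$). Fix $x\in\Omega_\e$ and a $C^2$ function $\phi$ touching $u^\e$ from above at $x$; since $u^\e\in C^1$ one has $\nabla\phi(x)=\nabla u^\e(x)$, and by the magic property of super-jets (as in the proof of Lemma~\ref{l:approx1}) a translate of $\phi$ touches $u$ from above at $y_\e(x)=x+\e\nabla u^\e(x)\in\Omega$, with the same gradient and Hessian, whence $\Delta_\infty$ of that translate at $y_\e(x)$ equals $\Delta_\infty\phi(x)$. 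Testing the subsolution inequality for \eqref{ground} at $y_\e(x)$ and using the identity $u(y_\e(x))=u^\e(x)+\tfrac\e2|\nabla u^\e(x)|^2$, we get: \emph{either} $|\nabla u^\e(x)|\le\Lambda_\infty\bigl(u^\e(x)+\tfrac\e2|\nabla u^\e(x)|^2\bigr)$, \emph{or} $\Delta_\infty\phi(x)\ge 0$. Therefore, setting
\[
\eta_\e:=\tfrac\e2\,\Lambda_\infty C^2\,,\qquad
G_\e:=\bigl\{x\in\Omega_\e:\ |\nabla u^\e(x)|>\Lambda_\infty u^\e(x)+\eta_\e\bigr\}\,,
\]
and noting that $\Lambda_\infty\tfrac\e2|\nabla u^\e(x)|^2\le\eta_\e$ on $\Omega_\e$, we see that at every $x\in G_\e$ the first alternative fails, so $u^\e$ is $\infty$-subharmonic there; since moreover $\eta_\e\to 0$, $G_\e$ is open and $u^\e\in C^{1,1}(G_\e)$, property~(a) yields that $t\mapsto|\nabla u^\e(\Xe(t,x_\e))|$ is non-decreasing as long as $\Xe(t,x_\e)$ stays in $G_\e$.

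It remains to choose $c_\e$ and to check that each trajectory stays in $G_\e$ until it enters $M_\e$. Since $u^\e\le m_\e$ on $\partial\Omega_\e$ with $m_\e\to 0$ (Lemma~\ref{l:approx1}(ii)) and $|\nabla u^\e|\ge 1-b_\e$ there by \eqref{f:stimae}, for $\e$ small the inequality $|\nabla u^\e|>\Lambda_\infty u^\e+\eta_\e$ holds at every point of $\partial\Omega_\e$, hence in a neighbourhood of it, so the trajectory lies in $G_\e$ for small positive times. Put $c_\e:=b_\e+\eta_\e+\e\in(0,1)$ --- so $c_\e\to 0$ as $\e\to 0^+$, hence as $b_\e\to 0$ --- and $M_\e:=\{u^\e>1-c_\e\}$; note that then $1-b_\e>\Lambda_\infty(1-c_\e)+\eta_\e$. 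Fix $x_\e\in\partial\Omega_\e$ and let $t^*\in(0,T_\e(x_\e)]$ be the supremum of the times $t$ with $\Xe((0,t],x_\e)\subset G_\e$, so that $\Xe((0,t^*),x_\e)\subset G_\e$ and, by property~(a), $|\nabla u^\e(\Xe(\cdot,x_\e))|$ is non-decreasing on $(0,t^*)$. If $t^*<T_\e(x_\e)$, then along $(0,t^*)$ the gradient modulus is $\ge 1-b_\e$, while $u^\e\le 1-c_\e$ on $[0,T_\e(x_\e)]$ because the arc stays outside $M_\e$ by Lemma~\ref{l:prel}(ii); letting $t\uparrow t^*$ we obtain $|\nabla u^\e(\Xe(t^*,x_\e))|\ge 1-b_\e>\Lambda_\infty u^\e(\Xe(t^*,x_\e))+\eta_\e$, i.e.\ $\Xe(t^*,x_\e)\in G_\e$, and since $G_\e$ is open this contradicts the maximality of $t^*$. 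Hence $\Xe((0,T_\e(x_\e)],x_\e)\subset G_\e$; by continuity at $t=0$, $t\mapsto|\nabla u^\e(\Xe(t,x_\e))|$ is non-decreasing on all of $[0,T_\e(x_\e)]$, which is \eqref{f:th}, and \eqref{f:th2} follows as in the first paragraph.

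The heart of the matter --- and the place where the special structure of \eqref{ground} really bites --- is the transfer step of the second paragraph: there is no equation for $u^\e$ from which the monotonicity of $|\nabla u^\e|$ could simply be read off, so one has to extract it from the one-sided (subsolution) information alone, absorbing the mismatch into the exact error $\tfrac\e2|\nabla u^\e(x)|^2$; it is the finiteness and $\e$-independence of the Lipschitz constant $C$ that makes $\eta_\e$ vanish and keeps $c_\e$ in $(0,1)$. After that, the inward propagation of the boundary estimate \eqref{f:stimae} is the routine continuation argument above --- and the fact that the monotonicity is claimed only up to the entering time $T_\e(x_\e)$ is exactly what makes it compatible with the trajectory eventually reaching $M(\Omega)$, where $\nabla u^\e$ vanishes.
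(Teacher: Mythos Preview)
Your proof is correct and follows essentially the same strategy as the paper: transfer the subsolution property of \eqref{ground} to $u^\e$ via the magic property of super-jets, identify an open set on which $u^\e$ is $\infty$-subharmonic, and use the monotonicity of $|\nabla u^\e|$ along the gradient flow there together with a continuation argument. The only noteworthy technical difference is that you bound the error term $\tfrac{\e}{2}|\nabla u^\e|^2$ uniformly by $\eta_\e=\tfrac{\e}{2}C^2$ (using a global Lipschitz constant $C$ for $u$), whereas the paper keeps the exact expression and works on the set $Q_\e=\{u^\e<|\nabla u^\e|-\tfrac{\e}{2}|\nabla u^\e|^2\}$, exploiting that $s\mapsto s-\tfrac{\e}{2}s^2$ is increasing on $[0,1/\e]$ to obtain directly $\varphi_\e(\tau_\e)\ge 1-b_\e-\tfrac{\e}{2}(1-b_\e)^2$ and hence $c_\e=b_\e+\tfrac{\e}{2}(1-b_\e)^2$; your crude bound costs you the extra input that $u$ is globally Lipschitz (true, but not stated in the paper's preliminaries) and yields a slightly larger $c_\e$, but simplifies the continuation step. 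One small slip: your justification that $\Lambda_\infty=1$ (``since $\|u\|_\infty=1$ for a variational infinity ground state'') is not right --- the normalization $\max u=\max d_{\partial\Omega}$ gives $\max u=1/\Lambda_\infty$, not $1$; the paper's own proof also tacitly assumes $\Lambda_\infty=1$, consistent with the ``without loss of generality'' made in the proof of Theorem~\ref{thm1} where the proposition is applied.
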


\begin{proof}
We start by proving the following claim, where $x $ is an arbitrary fixed point in $\Omega _\e$:
\begin{equation}\label{f:supine}
(p, X)\in J^{2,+} u^\e(x),\
u^\e(x) < |p| - \frac{\e |p|^2}{2}
\quad\Longrightarrow\quad
\pscal{Xp}{p}\geq 0
\quad (\text{with}\ p = \nabla u^\e(x)).
\end{equation}
Namely, let $x\in\Omega_\e$ and let
$(p, X) \in J^{2,+} u^{\e}(x)$.
By the magic property of super-jets ({\it cf.} \cite[Lemma A.5]{CHL}), taking into account that $u^\e$ is differentiable at every point,
we have that
\[
p = \nabla u^\e(x),\qquad
(p,X) \in J^{2,+} u(x+\e p).
\]

Since $u$ is a sub-solution of \eqref{ground},
by definition it holds
\[
\min\left\{\frac{|p|}{u(x+\e p)} - 1, \ -\pscal{Xp}{p}\right\}\leq 0.
\]
If $|p| > u(x+\e p)$,
that is if 
\[
u(x+\e p) = u^\e(x) + \frac{\e |p|^2}{2} < |p|\,,
\]
from the above condition we deduce that
$\pscal{Xp}{p}\geq 0$, and \eqref{f:supine} is proved.

\smallskip

Let us consider the open set
\[
Q_\e := \left\{
x\in\Omega_\e:\
u^\e(x) < |\nabla u^\e(x)| - \frac{\e}{2}\, |\nabla u^\e(x)|^2
\right\}\,.
\]
If $x\in Q_\e$ and $(p,X)\in J^{2,+} u^\e(x)$,
from the previous claim we have that $-\pscal{Xp}{p}\leq 0$,
hence $u^\e$ is a viscosity subsolution of
the equation $-\Delta_\infty u^\e \leq 0$ in $Q_\e$, 
i.e., it is $\infty$--subharmonic in $Q_\e$.

For every $x_\e\in \partial \Omega_\e$ 
let us consider the restriction $\varphi_\e (t)  := u^\e(\Xe(t, x_\e))$ of $u^\e$
along $\Xe(t, x_\e)$ for $t\geq 0$, and let
us define
\begin{equation}\label{f:taue}
\tau_\e := \sup\{ t > 0: \
\varphi _\e (s) < \dot{\varphi _\e}(s)- \frac{\e}{2}\, \dot{\varphi _\e}(s)^2
\quad\forall s\in [0,t)\}.
\end{equation}
Since 
\[
\varphi _\e(0) = m_\e  \qquad \text{ and } \quad 
\dot{\varphi _\e}(0) = |\nabla u^\e(x_\e)| \geq 1 - b _\e \,
\]
with $m_\e \to 0$ and $1- b_\e \to 1$ as $\e \to 0 ^+$, 
then, for small $\e$, every  $t>0$ sufficiently small belongs to the set at the right-hand side of \eqref{f:taue}. 
Hence $\tau_\e > 0$.

At every point $x = \Xe(t, x_\e)$ with $t\in [0, \tau_\e)$ we have 
\[
u^\e(x) < |\nabla u^\e(x)| - \frac{\e}{2}\, |\nabla u^\e(x)|^2\,, 
\]
hence $\Xe(t, x_\e) \in Q_\e$ for every $t\in [0, \tau_\e)$.

Since $u^\e$ is $\infty$--subharmonic in $Q_\e$,
from the properties of the gradient flow of
$\infty$--subharmonic functions (see e.g.\ \cite[Proposition~6.2]{Cran})
we have that
the map $t\mapsto \varphi_\e(t)$ is convex in $[0,\tau_\e]$.
In particular, $\dot{\varphi}_\e$ is a non-decreasing function in $[0,\tau_\e]$. 

Hence, 
by using the assumption $\dot{\varphi_\e }(0) \geq 1- b _\e$, and 
the
fact that the map $s \mapsto s  - \e\, s^2/2$ is 
increasing in $[0, 1/\e]$, we get
\[
\varphi _\e(\tau_\e ) = \dot{\varphi_\e }(\tau_\e )- \frac{\e}{2}\, \dot{\varphi_\e }(\tau_\e )^2
\geq 
\dot{\varphi_\e }(0)- \frac{\e}{2}\, \dot{\varphi_\e }(0)^2\geq 
1-b_\e - \frac{\e}{2} ( 1 - b_\e)^2\,.
\]
Therefore, if we consider the level set $M _\e$ defined in \eqref{f:Me} with  
\[
c _\e := b_\e + \frac{\e}{2} ( 1 - b_\e)^2\,, 
\]
at the time $\tau_\e$ the trajectory $\Xe(\cdot, x_\e)$ has
already entered $M_\e$, so that $T_\e(x_\e) < \tau_\e$.

Then the inequality \eqref{f:th} holds because $\dot{\varphi_\e}$ is a non-decreasing function in $[0,\tau_\e]$. 
\end{proof}

\bigskip

{\bf Completion of the proof of Theorem \ref{thm1}}. 

We can assume without loss of generality that $\Lambda _ \infty = 1$. 
We proceed in three steps. 

\smallskip
{\it Step 1:  It holds $c=1$}.  

Let $z \in M$, and let $y \in \Pi (z)$. The function $d _{\partial \Omega}$ is differentiable on the open ray $]y, z[$; morever, 
since $u$ is assumed to be of class $C ^ 1$ in a neighbourhood of $\partial \Omega$, 
setting $\nu := (z-y)/ |z-y|$ there exists $\d>0$ such that  $u$ is differentiable at any point of $]y, y + \d \nu[$.  
Since  $u \leq d _{\partial \Omega}$ in $\Omega$   and $u = d _{\partial \Omega}$ on $]y, z[$ ({\it cf.}\ Proposition \ref{p:Yu2}), we have that $\nabla u(x) = \nabla d(x)$ on  the segment $]y, y + \d \nu[$. Since by assumption  $u $ is of class $C ^ 1$ up to the boundary ({\it i.e.}  on  $\{ x \in \overline \Omega \ :\ 
d _{\partial \Omega} (x)  < \d \}$), we infer that
$$ c = |\nabla u (y)| = \lim _{\d \to 0} { |\nabla u ( y + \d \nu )| } = \lim _{\d \to 0}  |\nabla d_{\partial \Omega} ( y + \d \nu )|   = 1 \,.$$

\smallskip

{\it Step 2: It holds $|\nabla u |   \geq 1 \ \hbox{a.e.\ in } \Omega\,.$} 

Since $u$ is assumed to be of class $C^1$ in a neighborhood of $\partial\Omega$,
there exists $\e_0 > 0$ such that
$\partial\Omega_\e$ is contained in this neighborhood for every $\e \in (0, \e_0)$.
Moreover, since $|\nabla u| = 1$ on $\partial\Omega$, we have
\[
\min_{\partial\Omega_\e} |\nabla u^\e| =: 1 - b_\e
\qquad
\text{with}\ b_\e \to 0\ \text{as}\ \e\to 0^+.
\]
From Proposition~\ref{l:approx2} we deduce that there exists $c _\e = c _\e (b _\e)$ (with 
$c_\e (b) \to 0$ as $b \to 0$) such that, if $M _\e$ is defined by \eqref{f:Me}, there holds
\begin{equation}\label{f:estig}
|\nabla u^\e(x)| \geq 1 - b_\e \qquad
\forall x \in \Omega_\e \setminus M_\e.
\end{equation}
Moreover, $\Omega_\e \to \Omega$ and $M_\e\to M(\Omega)$ in the Hausdorff distance,
so that every point $x\in \Omega\setminus M(\Omega)$ belongs to
$\Omega_\e \setminus M_\e$ for $\e$ small enough.

Since $b_\e \to 0$, from Lemma~\ref{l:approx1} (iv) we deduce that
$|\nabla u(x)| \geq 1$ at every point $x\in\Omega\setminus M(\Omega)$ of differentiability of $u$,
hence for almost every $x\in\Omega$.

\smallskip
{\it Step 3: It holds $u = \dist$ and $\Omega$ is a stadium-like domain.} 

In view of Proposition \ref{p:Yu2} (i), to show that $u = \dist$ we have to prove only the inequality $u \geq \dist$. 
To that aim, we exploit Step 2: since $u$ satisfies the inequality $|\nabla u |   \geq 1$ a.e.\ in  $\Omega$ and is locally semiconcave ({\it cf.} Proposition~\ref{p:Yu1}), 
it turns out to be a viscosity supersolution to the eikonal equation (see \cite{CaSi}, proof of the inequality (5.16) in Proposition~5.3.1). 
Then, taking into account that $u = 0$ on $\partial \Omega$, the required inequality $u \geq \dist$ follows from the comparison principle for viscosity solutions to the eikonal equation proved in \cite[Theorem~1]{Ishii}. 

Finally, the equality $u = \dist$, combined with Theorem 2.7 in \cite{Yu} tells us that $\cut (\Omega) = \high (\Omega)$, so that $\Omega$ is a stadium-like domain. 
\qed

\section{Proof of Theorem \ref{thm2}}\label{proof2}

Under the assumptions of Theorem \ref{thm2}, in place of working with the supremal convolutions, we can consider  directly the gradient flow of the variational infinity ground state $u$, namely the family of curves solving the Cauchy problems
\begin{equation}\label{f:geo}
\begin{cases}
\dot \gamma (t) = \dfrac{\nabla u  (\gamma  (t))}{|\nabla u  (\gamma  (t))|}\,,
\\
\gamma  (0) = x  \in  \Omega \setminus M (\Omega) \,. 
\end{cases}
\end{equation}

Indeed, by the assumption $u \in C ^ {1, 1} (\Omega \setminus M(\Omega))$, for every 
$x \in\Omega \setminus M(\Omega)$  problem \eqref{f:geo} admits a unique  solution 
\[
\X(\cdot, x)\colon I^x := (T^-(x), T^+(x)) \to \Omega\setminus M(\Omega),
\]  
which  will be called a {\it trajectory} associated with $u$.

\begin{lemma}\label{l:prel2}
Let $u$ be a variational infinity ground state, and assume that $u \in C ^ {1, 1} (\Omega \setminus M(\Omega))$. 
Then
every trajectory $\X (\cdot, x)$ enters $M(\Omega) $ at the finite time $T^+(x)$,
that is,
\[
\X (t, x  ) \in \Omega \setminus M (\Omega)\quad \forall t \in [0, T^+ (x)), \qquad  
\lim_{t\nearrow T^+(x)} \X (t, x  ) \in  M (\Omega).
\]
\end{lemma}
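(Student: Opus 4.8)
\textbf{Proof plan for Lemma \ref{l:prel2}.}

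The plan is to mimic the structure of the proof of Lemma \ref{l:prel} (ii), but working directly with $u$ rather than with a supremal convolution, exploiting the fact that $u \in C^{1,1}(\Omega \setminus M(\Omega))$ makes the normalized gradient flow well posed away from the high ridge. First I would record the two monotonicity facts that come for free: along any trajectory $\gamma := \X(\cdot, x)$ the map $t \mapsto u(\gamma(t))$ is non-decreasing, since $\frac{d}{dt} u(\gamma(t)) = |\nabla u(\gamma(t))| \geq 0$; moreover, by Proposition \ref{p:Yu2} (iii) and the log-concavity of $u$ (Proposition \ref{p:Yu1}), one has $M(\Omega) = \{x \in \Omega : 0 \in D^+ u(x)\}$, so that outside any neighbourhood of $M(\Omega)$ the gradient of $u$ is bounded away from zero. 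The point to be careful about is that $M(\Omega)$ is a \emph{closed} set which may touch $\partial \Omega$ only through the boundary, and that $u$ is merely continuous (not $C^{1,1}$) up to $M(\Omega)$, so I would phrase everything in terms of sublevel sets $\{u \le \mu\}$ with $\mu < \max u$, which are compact subsets of $\Omega$ once $u$ is extended by $0$ on $\partial\Omega$.

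The core estimate is a quantitative lower bound on $|\nabla u|$ along the trajectory before it reaches a fixed neighbourhood of $M(\Omega)$. Fix $\mu_0 \in (u(x), \max u)$ and let $N := \{u > \mu_0\}$, an open neighbourhood of $M(\Omega)$. On the compact set $\overline{\{ \mu_1 \le u \le \mu_0\}}$ (where $\mu_1 := u(x)$ if $x \notin \partial\Omega$, or a slightly smaller value otherwise), the function $|\nabla u|$ is continuous and, by the characterization of $M(\Omega)$ above together with Proposition \ref{p:Yu3}, strictly positive; hence it has a positive minimum $\alpha > 0$. Setting $\varphi(t) := u(\gamma(t))$, as long as $\gamma(t) \notin N$ we have $\dot\varphi(t) = |\nabla u(\gamma(t))| \ge \alpha$, so $\varphi(t) \ge \mu_1 + \alpha t$; since $\varphi$ is bounded above by $\max u$, the trajectory must enter $N$ at some finite time $\bar t \le (\max u - \mu_1)/\alpha < T^+(x)$. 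Because $\mu_0 \in (u(x), \max u)$ was arbitrary and such neighbourhoods $N$ shrink to $M(\Omega)$, this shows that as $t \nearrow T^+(x)$ the value $u(\gamma(t))$ increases to $\max u$, i.e.\ $\operatorname{dist}(\gamma(t), M(\Omega)) \to 0$. Two cases then remain: if $T^+(x) = +\infty$ one must still argue that $\gamma(t)$ actually converges (not merely accumulates) — here I would invoke that the arc length is $t$, so if $T^+(x)$ were infinite the trajectory would have infinite length while staying in the bounded set $\Omega$, and combine this with the convexity of $t \mapsto u(\gamma(t))$ on regions where $u$ is $\infty$-harmonic (property (b) recalled before Lemma \ref{l:prel}) to exclude such behaviour; if $T^+(x) < +\infty$, uniform continuity of $\gamma$ (unit speed) gives a genuine limit point $\bar x = \lim_{t \nearrow T^+(x)} \gamma(t)$, and $u(\bar x) = \max u$ forces $\bar x \in M(\Omega)$.

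I expect the main obstacle to be precisely the dichotomy $T^+(x) < +\infty$ versus $T^+(x) = +\infty$, that is, ruling out that a trajectory approaches $M(\Omega)$ only asymptotically while having infinite length. The clean way around this is to argue that $T^+(x) < +\infty$ directly: fix any neighbourhood $N$ of $M(\Omega)$ as above; the argument in the previous paragraph shows the trajectory enters $N$ in finite time $\bar t_N$, but one wants $T^+(x)$ itself finite. For this I would note that the maximal existence time of the flow is limited only by the trajectory leaving the open set $\Omega \setminus M(\Omega)$ on which the vector field is Lipschitz; since $|\nabla u|$ is bounded below by a positive constant on the part of the trajectory outside any fixed $N$, and since by the eikonal-type bound $|\nabla u| \le \|S^-\|_\infty < \infty$ (Proposition \ref{p:Yu3}, local boundedness of $|\nabla u|$) together with local semiconcavity the speed does not degenerate, the trajectory is forced into $M(\Omega)$ in finite time — concretely, $T^+(x) = \bar t$ with $\bar t$ the first time $u(\gamma(t)) = \max u$, which is finite by the linear growth estimate $\varphi(t) \ge \mu_1 + \alpha t$ applied with $\alpha$ the minimum of $|\nabla u|$ over $\overline{\Omega \setminus M(\Omega)}$ minus an arbitrarily small neighbourhood, and a limiting argument letting that neighbourhood shrink. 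Thus $T^+(x) < +\infty$ and $\lim_{t \nearrow T^+(x)} \X(t,x) \in M(\Omega)$, which is the assertion.
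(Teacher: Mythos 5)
Your first two paragraphs are sound and essentially reproduce the mechanism of Lemma \ref{l:prel}(ii): on each compact slab $\{\mu_1\le u\le\mu_0\}$ with $\mu_0<\max u$ one has $|\nabla u|\ge\alpha(\mu_0)>0$ (by log-concavity, $\nabla u(x)=0$ forces $x\in M(\Omega)$), so the unit-speed trajectory reaches $\{u>\mu_0\}$ in time at most $(\mu_0-\mu_1)/\alpha(\mu_0)$, and hence $u(\X(t,x))\to\max u$ as $t\nearrow T^+(x)$. But this only shows that the trajectory \emph{accumulates} on $M(\Omega)$; the lemma asserts $T^+(x)<+\infty$ and an actual limit, and your closing argument for that is where the gap lies. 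The ``limiting argument letting that neighbourhood shrink'' does not work: $\alpha(\mu_0)$ is the minimum of $|\nabla u|$ over a region whose closure approaches $M(\Omega)$ as $\mu_0\to\max u$, and a priori (before one knows $u=\dist$) nothing prevents $|\nabla u|$ from degenerating near $M(\Omega)$; then $\alpha(\mu_0)\to 0$, the bounds $(\max u-\mu_1)/\alpha(\mu_0)$ blow up, and you obtain no bound on $T^+(x)$ whatsoever. Since the normalized flow has unit speed, $T^+(x)=+\infty$ would mean a trajectory of infinite length approaching $M(\Omega)$ without converging, a behaviour that $C^{1,1}$ regularity and semiconcavity alone do not exclude (gradient trajectories of smooth functions may fail to converge to the critical set and may have infinite length). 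The appeal to ``$|\nabla u|\le\|S^-\|_\infty$'' and ``the speed does not degenerate'' does not help: the speed is identically $1$ by construction, and an upper bound on $|\nabla u|$ says nothing about the time needed.

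The missing ingredient --- which you gesture at with ``convexity of $t\mapsto u(\gamma(t))$ on regions where $u$ is $\infty$-harmonic'' but never secure --- is exactly what the paper uses: by Yu's Theorem~3.1, a variational infinity ground state satisfies $-\Delta_\infty u=0$ in all of $\Omega\setminus M(\Omega)$, and for a $C^{1,1}$ $\infty$-harmonic function the gradient-flow properties (a)--(b) recalled before Lemma \ref{l:prel} (Crandall, Prop.~6.2) yield that $\varphi(t):=u(\X(t,x))$ is \emph{affine} with constant slope $|\nabla u(x)|>0$. Since $\varphi\le\max u$, this forces $T^+(x)\le(\max u-u(x))/|\nabla u(x)|<+\infty$; the unit-speed trajectory is then Lipschitz up to $T^+(x)$, so it has a limit $\bar x$ with $u(\bar x)=\max u$, i.e.\ $\bar x\in M(\Omega)$. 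Note that this $\infty$-harmonicity is not an immediate consequence of equation \eqref{ground} (the subsolution inequality only gives $-\Delta_\infty u\le 0$ at points where the eikonal branch is strictly positive); it is a theorem of Yu, and without citing or proving it your argument cannot close.
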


\begin{proof}
Let \(\X(\cdot, x)\) be a trajectory,
and let us consider the function $\varphi(t) := u (\X(t,x))$.  
Taking into account that $-\Delta_\infty u = 0$ in $\Omega\setminus M(\Omega)$
(see \cite[Thm.~3.1]{Yu}), 
from the properties of the gradient flow of
$\infty$--harmonic functions (see e.g.\ \cite[Proposition~6.2]{Cran})
we conclude that $\varphi$ is affine on $[0, T^+(x))$ and, being increasing,
	the trajectory $t\mapsto \X(t,x)$ enters $M(\Omega)$ in finite time,
	{\it i.e.}\ $T^+(x) < +\infty$.
\end{proof}

\bigskip

{\bf Completion of the proof of Theorem \ref{thm2}}

We can assume without loss of generality that $\Lambda _ \infty = 1$. 
We proceed in three steps. 

\smallskip
{\it Step 1:  For every $z \in M(\Omega)$, it holds $S ^ - (z) = 1$}.  

Let $z \in M (\Omega)$ and $y \in \Pi (z)$ be fixed. The function $d_{\partial \Omega}$ is differentiable at any point of the ray $]y,z[$, and by assumption the same holds true for $u$, so that in particular we have $S ^ - (x) = |\nabla u (x)|$ for every $x \in ]y, z[$. 
Since  $u \leq d _{\partial \Omega}$ in $\Omega$   and $u = d _{\partial \Omega}$ on $]y, z[$ ({\it cf.} Proposition \ref{p:Yu2}), we have $\nabla u(x) = \nabla d _{\partial \Omega} (x)$ on  the segment $]y, z[$. Thus, 
	$$S^-(x) = |\nabla u (x)| = |\nabla d _{\partial \Omega} (x)| = 1 \qquad \forall \, x\in ]y,z[\, . $$
	Recalling that $S^-$ is continuous is $\Omega$ ({\it cf.} Proposition \ref{p:Yu3}),  
	we conclude that $S^-(z) = 1$.

\smallskip
{\it Step 2: It holds $|\nabla u |   = 1 \ \hbox{a.e.\ in } \Omega\,.$} 
	
Let us consider the gradient flow associated with $u$. 
We know from Lemma \ref{l:prel2} that, for every $x\in \Omega\setminus M (\Omega)$,  
the trajectory \(\X(\cdot, x)\) enters $M (\Omega)$ in finite time. 
By arguing as in the proof of Lemma \ref{l:prel2}, 
we see that $|\nabla u|$ is constant along it.
	By Step 1 and the continuity of $S^-$, we deduce that
	$|\nabla u(x)| = 1$
	for a.e.\ $x\in\Omega\setminus M (\Omega)$.

\smallskip
{\it Step 3: It holds $u = \dist$ and $\Omega$ is a stadium-like domain.} 

Since by Step 2 there holds  $|\nabla u |   =1$ a.e.\ in  $\Omega$, and since $u$ is locally semiconcave ({\it cf.} Proposition \ref{p:Yu1}), 
it turns out to be a viscosity solution to the eikonal equation (see \cite[Proposition 5.3.1]{CaSi}). 
Then, we can conclude the proof as done for Theorem \ref{thm1}. Namely,  since  $u = 0$ on $\partial \Omega$, the  equality $u =\dist$ follows from the comparison principle for viscosity solutions to the eikonal equation proved in \cite[Theorem 1]{Ishii}. 
Once proved that $u = \dist$, Theorem 2.7 in \cite{Yu} tells us that $\cut (\Omega) = \high (\Omega)$, so that  $\Omega$ is a stadium-like domain.

\bigskip
\noindent
\textbf{Acknowledgements.}
The authors would like to thank 
an anonymous referee for having suggested
a significant simplification of some technical proofs.

\def\cprime{$'$}
\begin{bibdiv}
	\begin{biblist}
		
		\bib{CaSi}{book}{
			author={Cannarsa, {P.}},
			author={Sinestrari, {C.}},
			title={Semiconcave functions, {H}amilton-{J}acobi equations and optimal
				control},
			series={Progress in Nonlinear Differential Equations and their
				Applications},
			publisher={Birkh\"auser},
			address={Boston},
			date={2004},
			volume={58},
		}
		
		\bib{CDJ}{article}{
			author={Champion, {T.}},
			author={Pascale, {L.}~De},
			author={Jimenez, {C.}},
			title={The {$\infty$}-eigenvalue problem and a problem of optimal
				transportation},
			date={2009},
			journal={Commun. Appl. Anal.},
			volume={13},
			number={4},
			pages={547\ndash 565},
		}
		
		\bib{Cran}{incollection}{
			author={Crandall, {M.G.}},
			title={A visit with the {$\infty$}-{L}aplace equation},
			date={2008},
			booktitle={Calculus of variations and nonlinear partial differential
				equations},
			series={Lecture Notes in Math.},
			volume={1927},
			publisher={Springer},
			address={Berlin},
			pages={75\ndash 122},
		}
		
		\bib{CHL}{article}{
			author={Crandall, {M.G.}},
			author={Ishii, {H.}},
			author={Lions, {P.L.}},
			title={User's guide to viscosity solutions of second order partial
				differential equations},
			date={1992},
			journal={Bull. Amer. Math. Soc. (N.S.)},
			volume={27},
			pages={1\ndash 67},
		}
		
		\bib{CFd}{article}{
			author={Crasta, {G.}},
			author={Fragal{\`a}, {I.}},
			title={On the {D}irichlet and {S}errin problems for the inhomogeneous
				infinity {L}aplacian in convex domains: regularity and geometric results},
			date={2015},
			ISSN={0003-9527},
			journal={Arch. Ration. Mech. Anal.},
			volume={218},
			number={3},
			pages={1577\ndash 1607},
			url={http://dx.doi.org/10.1007/s00205-015-0888-4},
			review={\MR{3401015}},
		}
		
		\bib{CFb}{article}{
			author={Crasta, {G.}},
			author={Fragal{\`a}, {I.}},
			title={On the characterization of some classes of proximally smooth
				sets},
			date={2016},
			journal={ESAIM Control Optim.\ Calc.\ Var.},
			volume={22},
			number={3},
			pages={710\ndash 727},
			url={http://dx.doi.org/10.1051/cocv/2015022},
		}
		
		\bib{CFe}{article}{
			author={Crasta, {G.}},
			author={Fragal{\`a}, {I.}},
			title={A {$C^1$} regularity result for the inhomogeneous normalized
				infinity {L}aplacian},
			date={2016},
			ISSN={0002-9939},
			journal={Proc. Amer. Math. Soc.},
			volume={144},
			number={6},
			pages={2547\ndash 2558},
			url={http://dx.doi.org/10.1090/proc/12916},
			review={\MR{3477071}},
		}
		
		\bib{CFf}{article}{
			author={Crasta, {G.}},
			author={Fragal{\`a}, {I.}},
			title={Characterization of stadium-like domains via boundary value
				problems for the infinity {L}aplacian},
			date={2016},
			ISSN={0362-546X},
			journal={Nonlinear Anal.},
			volume={133},
			pages={228\ndash 249},
			url={http://dx.doi.org/10.1016/j.na.2015.12.007},
			review={\MR{3449756}},
		}
		
		\bib{HSY}{article}{
			author={Hynd, {R.}},
			author={Smart, {C.K.}},
			author={Yu, {Y.}},
			title={Nonuniqueness of infinity ground states},
			date={2013},
			journal={Calc. Var. Partial Differential Equations},
			volume={48},
			number={3-4},
			pages={545\ndash 554},
		}
		
		\bib{Ishii}{article}{
			author={Ishii, H.},
			title={A simple, direct proof of uniqueness for solutions of the
				{H}amilton-{J}acobi equations of eikonal type},
			date={1987},
			journal={Proc. Amer. Math. Soc.},
			volume={100},
			number={2},
			pages={247\ndash 251},
		}
		
		\bib{JLM}{article}{
			author={Juutinen, {P.}},
			author={Lindqvist, {P.}},
			author={Manfredi, {J.J.}},
			title={The {$\infty$}-eigenvalue problem},
			date={1999},
			ISSN={0003-9527},
			journal={Arch. Ration. Mech. Anal.},
			volume={148},
			number={2},
			pages={89\ndash 105},
			url={http://dx.doi.org/10.1007/s002050050157},
			review={\MR{1716563}},
		}
		
		\bib{JLM2}{incollection}{
			author={Juutinen, {P.}},
			author={Lindqvist, {P.}},
			author={Manfredi, {J.J.}},
			title={The infinity {L}aplacian: examples and observations},
			date={2001},
			booktitle={Papers on analysis},
			series={Rep. Univ. Jyv\"askyl\"a Dep. Math. Stat.},
			volume={83},
			publisher={Univ. Jyv\"askyl\"a, Jyv\"askyl\"a},
			pages={207\ndash 217},
			review={\MR{1886623}},
		}
		
		\bib{Kat}{book}{
			author={Katzourakis, {N.}},
			title={An introduction to viscosity solutions for fully nonlinear {PDE}
				with applications to calculus of variations in {$L^\infty$}},
			series={Springer Briefs in Mathematics},
			publisher={Springer, Cham},
			date={2015},
		}
		
		\bib{NRSS}{article}{
			author={Navarro, {J. C.}},
			author={Rossi, {J.D.}},
			author={Antolin, {A.}~San},
			author={Saintier, {N.}},
			title={The dependence of the first eigenvalue of the infinity
				{L}aplacian with respect to the domain},
			date={2014},
			journal={Glasg. Math. J.},
			volume={56},
			number={2},
			pages={241\ndash 249},
		}
		
		\bib{Sak}{article}{
			author={Sakaguchi, {S.}},
			title={Concavity properties of solutions to some degenerate quasilinear
				elliptic {D}irichlet problems},
			date={1987},
			ISSN={0391-173X},
			journal={Ann. Scuola Norm. Sup. Pisa Cl. Sci. (4)},
			volume={14},
			number={3},
			pages={403\ndash 421 (1988)},
			url={http://www.numdam.org/item?id=ASNSP_1987_4_14_3_403_0},
			review={\MR{951227 (89h:35133)}},
		}
		
		\bib{Yu}{article}{
			author={Yu, {Y.}},
			title={Some properties of the ground states of the infinity
				{L}aplacian},
			date={2007},
			journal={Indiana Univ. Math. J.},
			volume={56},
			pages={947\ndash 964},
		}
		
	\end{biblist}
\end{bibdiv}

\end{document}